\crefname{hypothesis}{Hypothesis}{Hypotheses}
\newtheorem{remark}{Remark}
\title{Uniform asymptotic expansions for Lommel, Anger-Weber and Struve functions}
\author{T. M. Dunster\thanks{Department of Mathematics and Statistics, San Diego State University, 5500 Campanile Drive, San Diego, CA 92182-7720, USA. 
  (\email{mdunster@sdsu.edu}, \url{https://tmdunster.sdsu.edu}).}}
\newcommand*{\addFileDependency}[1]{
  \typeout{(#1)}
  \@addtofilelist{#1}
  \IfFileExists{#1}{}{\typeout{No file #1.}}
}
\begin{document}

\maketitle

\begin{abstract}
  Using a differential equation approach asymptotic expansions are rigorously obtained for Lommel, Weber, Anger-Weber and Struve functions, as well as Neumann polynomials, each of which is a solution of an inhomogeneous Bessel equation. The approximations involve Airy and Scorer functions, and are uniformly valid for large real order $\nu$ and unbounded complex argument $z$. An interesting complication is the identification of the Lommel functions with the new asymptotic solutions, and in order to do so it is necessary to consider certain sectors of the complex plane, as well as introduce new forms of Lommel and Struve functions.
\end{abstract}

\begin{keywords}
  {Lommel functions, Anger-Weber functions, Struve functions, Neumann polynomials, Bessel functions, Asymptotic expansions}
\end{keywords}

\begin{AMS}
  33C10, 34E20, 34E05 
\end{AMS}

\section{Introduction} 
\label{sec1}

One of the most important classes of special functions are Bessel functions, which have numerous mathematical and physical applications. In this paper we undertake a comprehensive study of the asymptotic behaviour of solutions of the inhomogeneous form of Bessel's equation
\begin{equation}  \label{02}
\frac{d^{2}y}{dz^{2}}+\frac{1}{z}\frac{dy}{dz}
+\left(1-\frac{\nu^{2}}{z^{2}} \right)y=z^{\mu-1},
\end{equation}
for large real $\nu$, bounded complex $\mu$, and unbounded complex $z$.

The homogeneous form of (\ref{02}) has extensively-studied standard solutions $J_{\nu}(z)$, $H_{\nu}^{(1)}(z)$, $ H_{\nu}^{(2)}(z)$, which for $\nu>0$ form a numerically satisfactory set of solutions in the principal plane $-\pi<\arg(z)\leq \pi$ since they are recessive at $z=0$, $z=\infty$ ($\Im(z)>0$), and $z=\infty$ ($\Im(z)<0$), respectively. In particular
\begin{equation}  \label{46}
J_{\nu}(z) \sim 
\frac{\left(\frac{1}{2}z\right)^{\nu}}{\Gamma(\nu+1)}
\quad (z \rightarrow 0),
\end{equation}
\begin{equation}  \label{46a}
H^{(1)}_{\nu}(z) \sim
\left(\frac{2}{\pi z}\right)^{1/2}
e^{i\left(z-\frac{1}{2}\nu\pi-\frac{1}{4}\pi\right)}
\quad (z \rightarrow \infty, \, -\pi+\delta \leq
\arg(z) \leq 2\pi-\delta),
\end{equation}
and
\begin{equation}  \label{46b}
H^{(2)}_{\nu}(z) \sim
\left(\frac{2}{\pi z}\right)^{1/2}
e^{-i\left(z-\frac{1}{2}\nu\pi-\frac{1}{4}\pi\right)}
\quad (z \rightarrow \infty, \, -2\pi+\delta \leq
\arg(z) \leq \pi-\delta),
\end{equation}
where throughout this paper $\delta \in (0,1)$ represents an arbitrary small constant, unless otherwise stated. Uniform asymptotic expansions for large $\nu$ and complex $z$ for these functions are well-established. See \cite[Chap. 11]{Olver:1997:ASF} for expansions valid for unbounded complex $z$ involving Airy functions, complete with explicit error bounds. More recently in \cite{Dunster:2017:COA} and \cite{Dunster:2020:SEB} uniform asymptotic expansions, also involving Airy functions and valid in similar domains, were derived that are easier to compute, with simpler error bounds. These results will play an important part of the new expansions in this paper.

Standard particular solutions of (\ref{02}) are the Lommel functions denoted by $s_{\mu,\nu}(z)$ and $S_{\mu,\nu}(z)$. Their asymptotic approximation is much less well-developed, and the purpose here is to fill this gap. We shall obtain uniform asymptotic expansions for them, as well as newly defined numerically satisfactory solutions, for $\nu$ real and large, and bounded $\mu \in \mathbb{C}$. The new expansions are valid for unbounded real or complex $z$, and in conjunction with appropriate connection formulas valid for unrestricted $\arg(z)$. In short we bring the asymptotic theory of Lommel functions on par with their Bessel function counterparts.

In addition we apply our new results to a number of important related functions, namely the Anger function $\mathbf{J}_{\nu}(z)$, Weber function $\mathbf{E}_{\nu}(z)$, Anger-Weber function $\mathbf{A}_{\nu}(z)$, Struve functions $\mathbf{H}_{\nu}(z)$ and $\mathbf{K}_{\nu}(z)$, and Neumann polynomials $O_{n}(z)$. The domains of validity are as large as our ones for the Lommel functions.

Lommel functions and the related ones listed above occur in fluid dynamics, aerodynamics, magnetohydrodynamics, optical diffraction, and particle quantum dynamics. For a list of references see \cite[Sect. 11.2]{NIST:DLMF}. We also mention that Lommel functions can be expressed explicitly in terms of integrals, as well as hypergeometric functions \cite[Sect. 10.7]{Watson:1944:TTB}. They are also solutions of a homogeneous third order linear differential equation, which can be derived from differentiating (\ref{02}). The Neumann polynomials that we approximate are used to expand arbitrary functions as a series of Bessel functions \cite[Sect. 10.23(iii)]{NIST:DLMF}.

As another application, in \cite[Sect. 10.74]{Watson:1944:TTB} we find that some important integrals can be expressed in terms of Lommel functions, specifically
\begin{equation}
\int {z}^{\mu}\mathscr{C}_{\nu}(z) dz
= (\mu+\nu-1) z \mathscr{C}_{\nu}(z) S_{\mu-1,\nu-1}(z) 
-z \mathscr{C}_{\nu-1}(z) S_{\mu,\nu}(z),
\end{equation}
where $\mathscr{C}_{\nu}(z)$ is any solution of Bessel's equation.

For a summary of known asymptotic results of solutions of (\ref{02}) see \cite[Sects. 11.6 and 11.11]{NIST:DLMF} and the references therein. Most of these come from integral methods. For recent work we mention \cite{Nemes:2014:RP1} and \cite{Nemes:2014:RP2} where Nemes considered asymptotic expansions due to Watson \cite[Sect. 10.15]{Watson:1944:TTB} for the Anger-Weber functions $\mathbf{A}_{\pm \nu}(z)$ for large complex $\nu$. Using a reformulation of the method of steepest descents he constructed explicit and realistic error bounds, and asymptotics for the late coefficients are obtained, along with exponentially improved asymptotic expansions. In \cite{Nemes:2020:ELM} an approximation for $\mathbf{A}_{- \nu}(z)$ given by \cite[Eq. 11.11.17]{NIST:DLMF} is extended to a full expansion, but this is only valid in a neighbourhood of a turning point. 

A major difference is that in those papers $\arg(\nu)=\arg(z)$, whereas our corresponding results (in \cref{sec4}) have $\nu$ real and $z$ complex. Also our expansions are uniformly valid for complex $z$ lying in an unbounded domain arbitrarily close to $z=0$, and are also valid at $z=\infty$ ($|\arg(z)|\leq \pi -\delta$). For $\mathbf{A}_{-\nu}(z)$ our domain of validity is larger, including a full neighbourhood of a turning point ($z=\nu$), which in existing approximations must be treated separately.

In \cite[Chap. 9, Sect. 12]{Olver:1997:ASF} Olver obtains asymptotic expansions for $\mathbf{A}_{-\nu}(x)$ for large real $\nu$ using integral methods, which are valid for $0<x\leq 1$ and $1 \leq x <\infty$. These expansions involve a sequence of integrals, and are more complicated than our results, which as we mentioned are also not restricted to real argument.

In \cite{Paris:2015:ASF} Paris examined Watson's expansions of $\mathbf{H}_{\nu}(z)$ \cite[Sect. 10.43]{Watson:1944:TTB} for certain complex values of $\nu/z$. In \cite{Paris:2018:AGS} he obtained asymptotic expansions of a generalised Struve function for large complex argument. In \cite{Nemes:2015:OLA} and \cite{Nemes:2018:EBF} Nemes studied the the Lommel and related functions in detail for large $z$ and bounded parameters, deriving explicit error bounds, exponentially improved asymptotic expansions, and the smooth transition of the Stokes discontinuities.

The plan of this paper is as follows. In \cref{sec2} we give definitions and connection formulas for the Lommel functions. We also define three new such functions which are numerically satisfactory in various parts of the complex plane, which we denote by $S_{\mu,\nu}^{(j)}(z)$ ($j=0,1,2$). We provide identities that will be important for our subsequent asymptotic approximations, such as connection relations between these functions and Bessel functions, and analytic continuation formulas.

In \cref{sec3} uniform asymptotic approximations are derived for the Lommel functions $S_{\mu,\nu}(z)$ and $S_{\mu,\nu}^{(j)}(z)$ ($j=0,1,2$), with corresponding expansions for $s_{\mu,\nu}(z)$ following from these and a connection formula. The new expansions involve the same variables as for standard Bessel functions used in a Liouville transformation. The principal approximations come from the new general theory given in \cite{Dunster:2020:ASI} for inhomogeneous differential equations having a turning point. These asymptotic solutions involve slowly varying coefficient functions that are easy to compute, and Scorer functions which are solutions of an inhomogeneous Airy equation. Our new expansions are valid for large real $\nu$, fixed $\mu \in \mathbb{C}$, and uniformly for complex $z \neq 0$ (which is otherwise unrestricted via appropriate connection formulas). The Scorer function expansions are valid in a domain containing $|\arg(z)| \leq \pi - \delta < \pi$, in particular a full neighbourhood of the turning point in question. We also provide simpler expansions which are valid in unbounded, but more restricted parts of the complex plane, that do not contain a turning point.

In \cref{sec4} we apply the results of \cref{sec3} to provide uniform asymptotic expansions for the functions $\mathbf{J}_{\pm \nu}(z)$, $\mathbf{E}_{\pm \nu}(z)$, $\mathbf{A}_{\pm \nu}(z)$ and $O_{n}(z)$. In \cref{sec5} we similarly obtain expansions for $\mathbf{H}_{\nu}(z)$ and $\mathbf{K}_{\nu}(z)$. The expansions of \cref{sec4,sec5}, in conjunction with analytic continuation and connection formulas, are valid for large real $\nu$ (and $n$ for the Neumann polynomials), uniformly for unrestricted complex $z \neq 0$. 

\section{Lommel functions: Definitions and connection formulas} 
\label{sec2}

The first Lommel function is defined by \cite[Eqs. 11.9.3 and 11.9.4]{NIST:DLMF}

\begin{equation}  \label{03}
s_{\mu,\nu}(z)=
z^{\mu+1}\sum_{k=0}^{\infty}(-1)^{k}
\frac{z^{2k}}{a_{k+1}(\mu,\nu)},
\end{equation}
where
\begin{equation}  \label{04}
a_{k}(\mu,\nu)=\prod_{m=1}^{k}\left\{(\mu+2m-1)^{2}-\nu^{2}\right\},
\end{equation}
and principal values correspond to principal values of $(z/2)^{\mu+1}$. Its significance is that it is a solution which is real-valued for $\mu,\,\nu \in \mathbb{R}$ and $z>0$, and is bounded at $z=0$ when $\Re(\mu) \geq -1$. However, it is not defined if $\pm \nu - \mu = 1,3,5,\cdots$, which is a severe limitation. Another problem is that the function is not uniquely defined by its behaviour at $z=0$ when $\Re(\nu) > \Re(\mu)+1$, since from (\ref{46}) we see that $s_{\mu,\nu}(z)+CJ_{\nu}(z)$ is also $\mathcal{O}(z^{\mu+1})$ as $z \rightarrow 0$ for arbitrary constant $C$. We shall shortly introduce new Lommel functions that are subdominant at $z=0$ and do not have these deficiencies.

From \cite[p. 346]{Watson:1944:TTB} we note the integral representation
\begin{equation}  \label{05}
 s_{\mu,\nu}(z)
 =\frac{\pi}{2}
 \left[Y_{\nu}(z)\int_{0}^z 
 t^{\mu}J_{\nu}(t)dt  -J_{\nu}(z)\int_{0}^z 
 t^{\mu}Y_{\nu}(t)dt  \right]   \quad (\Re(\mu \pm \nu)>-1).
\end{equation}

Next from \cite[Eq. 11.9.5]{NIST:DLMF} a companion solution is defined by
\begin{equation}  \label{06}
S_{\mu,\nu}(z)=s_{\mu,\nu}(z)+A(\mu,\nu)\left\{\sin\left(\tfrac{1}{2}(\mu-\nu)\pi \right )J_{\nu}(z)-\cos\left(\tfrac{1}{2}(\mu-\nu)\pi \right )Y_{\nu}(z)\right\},
\end{equation}
where
\begin{equation}  \label{07}
A(\mu,\nu)=2^{\mu-1}\Gamma\left(\tfrac{1}{2}\mu+\tfrac{1}{2}\nu+\tfrac{1}{2}\right)\Gamma\left(\tfrac{1}{2}\mu-\tfrac{1}{2}\nu+\tfrac{1}{2}\right).
\end{equation}

We note the following reflection relations \cite[Eq. 11.9.6]{NIST:DLMF} shared by both of these Lommel functions
\begin{equation}  \label{07a}
s_{\mu,-\nu}(z)=s_{\mu,\nu}(z), \quad
S_{\mu,-\nu}(z)=S_{\mu,\nu}(z),
\end{equation}
and we utilise these later.

From (\ref{07}) and the Gamma function reflection property \cite[Eq.5.5.3]{NIST:DLMF} we see that the coefficient of $Y_{\nu}(z)$ in (\ref{06}) is (ignoring the minus sign)
\begin{equation}  \label{07b}
A(\mu,\nu)\cos\left(\tfrac{1}{2}(\mu-\nu)\pi\right) 
=\frac{\pi 2^{\mu-1}\Gamma\left(\tfrac{1}{2}\mu+\tfrac{1}{2}\nu
+\tfrac{1}{2}\right)}{\Gamma\left(\tfrac{1}{2}\nu
-\tfrac{1}{2}\mu+\tfrac{1}{2}\right)}.
\end{equation}
Thus $S_{\mu,\nu}(z)$ is certainly unbounded at $z=0$ if this is non-zero, since so too is $Y_{\nu}(z)$ (see \cite[Eq. 10.7.1 and 10.7.4]{NIST:DLMF}). The case where this coefficient vanishes, namely $\nu-\mu=-1,-2,-5,\cdots$ is not applicable in this paper, since we are considering $\mu$ bounded and $\nu \rightarrow \infty$.

The characterising property of $S_{\mu,\nu}(z)$ is that as $z\rightarrow \infty$ with $|\arg(z)|\leq\pi-\delta$ (\cite[Eq. 11.9.9]{NIST:DLMF})
\begin{equation}  \label{08}
S_{\mu,\nu}(z) \sim z^{\mu-1}\sum_{k=0}^{\infty}(-1)^{k}a_{k}(-\mu,\nu)z^{-2k}.
\end{equation}
Hence it is the unique solution of (\ref{02}) that grows at worst algebraically at $z= \pm i \infty$ in the principal plane: all other solutions are exponentially large at least at one of these singularities. We call this function subdominant at these singularities. Uniqueness also follows from its non-oscillatory behaviour along the positive real axis ($\arg(z)=0$). Moreover, unlike $s_{\mu,\nu}(z)$, it is well-defined for all values of the parameters.

We remark that at a singularity ($z=0$ or $z=\infty$) homogeneous solutions of (\ref{02}) are either recessive or dominant; see \cite[Chap. 5, Sect. 7.2]{Olver:1997:ASF} for a general discussion. For $\Re(\nu) > \Re(\mu)+1$ particular solutions are not recessive at a singularity, but rather are either subdominant or dominant there. For example, $H_{\nu}^{(1)}(z)$, $J_{\nu}(z)$ and $S_{\mu,\nu}(z)$ is recessive, dominant and subdominant, respectively, at $z=\infty \exp(\pi i/2)$. Recessiveness at one singularity uniquely defines a homogeneous solution (up to a multiplicative constant), whereas (assuming $\Re(\nu) > \Re(\mu)+1$) subdominance at two singularities is required to uniquely define a particular solution. 

Returning to (\ref{09}), if either of $\mu \pm \nu$ equals an odd positive integer then the RHS terminates and it represents $S_{\mu,\nu}(z)$ exactly. As a special case we get the Neumann polynomials, which are considered in \cref{sec4}.

Using the Wronskian \cite[Eq. 10.5.5]{NIST:DLMF}
\begin{equation}  \label{09}
\mathscr{W}\left\{H_{\nu}^{(1)}(z),H_{\nu}^{(2)}(z)\right\}
=-4i/(\pi z),
\end{equation}
we have by variation of parameters on (\ref{02}), subdominance at infinity for $|\arg(z)|\leq \pi -\delta$, and referring to (\ref{46a}) and (\ref{46b})
\begin{multline}  \label{10}
S_{\mu,\nu}(z)
=\frac{\pi i}{4}
\left[H_{\nu}^{(2)}(z)\int_{\infty \exp(\pi i/2)}^z 
t^{\mu}H_{\nu}^{(1)}(t)dt  \right. \\
\left. -H_{\nu}^{(1)}(z)\int_{\infty \exp(-\pi i/2)}^z t^{\mu}H_{\nu}^{(2)}(t)dt
\right].
\end{multline}

A numerically satisfactory form of a general solution of (\ref{02})  in the cut complex plane ($|\arg(z)|\leq\pi$), which excludes a certain region containing $z=0$, is given by
\begin{equation}  \label{11}
y(\nu,z)=c_{1}H_{\nu}^{(1)}(z)+c_{2}H_{\nu}^{(2)}(z)+S_{\mu,\nu}(z).
\end{equation}

As we remarked, $s_{\mu,\nu}(z)$ is not a satisfactory particular solution when considering small $z$. Instead we introduce a new functions $S_{\mu,\nu}^{(j)}(z)$ ($j=0,1,2$) such that a general solution has the numerically satisfactory representation
\begin{equation}  \label{11a}
y(\nu,z)=c_{0}J_{\nu}(z)+c_{1}H_{\nu}^{(1)}(z)+S_{\mu,\nu}^{(1)}(z),
\end{equation}
for $0 < \arg(z) < \pi$, 
\begin{equation}  \label{11b}
y(\nu,z)=c_{0}J_{\nu}(z)+c_{2}H_{\nu}^{(2)}(z)+S_{\mu,\nu}^{(2)}(z),
\end{equation}
for $-\pi < \arg(z) < 0$,
\begin{equation}  \label{11c}
y(\nu,x)=c_{0}J_{\nu}(x)+c_{1}Y_{\nu}(x)+S_{\mu,\nu}^{(0)}(x),
\end{equation}
for $0<x \leq \nu$, and
\begin{equation}  \label{11d}
y(\nu,x)=c_{0}J_{\nu}(x)+c_{1}Y_{\nu}(x)+S_{\mu,\nu}(x),
\end{equation}
for $\nu \leq x< \infty$.
	
These new Lommel functions are defined as follows. Firstly if we add the homogeneous solution
\begin{multline}  \label{12}
-iA(\mu,\nu)\cos\left(\tfrac{1}{2}(\mu-\nu)\pi\right)
\left\{J_{\nu}(z)+iY_{\nu}(z)\right \} \\
=-iA(\mu,\nu)\cos\left(\tfrac{1}{2}(\mu-\nu)\pi\right)
H_{\nu}^{(1)}(z),
\end{multline}
to both sides of (\ref{12}). This then leads to our first definition, namely
\begin{equation}  \label{13}
S_{\mu,\nu}^{(1)}(z)
=S_{\mu,\nu}(z)-iA(\mu,\nu)
\cos\left(\tfrac{1}{2}(\mu-\nu)\pi\right)H_{\nu}^{(1)}(z),
\end{equation}
or equivalently, from (\ref{07b})
\begin{equation}  \label{14}
S_{\mu,\nu}^{(1)}(z)
=S_{\mu,\nu}(z)-\frac{i\pi 2^{\mu-1}\Gamma\left(\tfrac{1}{2}\mu+\tfrac{1}{2}\nu
+\tfrac{1}{2}\right)}{\Gamma\left(\tfrac{1}{2}\nu
-\tfrac{1}{2}\mu+\tfrac{1}{2}\right)}H_{\nu}^{(1)}(z).
\end{equation}
Moreover, from (\ref{06}), (\ref{12}) and (\ref{13}), we have
\begin{equation}  \label{15}
S_{\mu,\nu}^{(1)}(z)=s_{\mu,\nu}(z)-ie^{(\mu-\nu)\pi i/2}A(\mu,\nu)J_{\nu}(z).
\end{equation}

Thus this is a particular solution of (\ref{02}) that is subdominant at $z=\infty \exp(\pi i/2)$ by virtue of (\ref{46a}), and it has the same expansion (\ref{08}) at infinity as $S_{\mu,\nu}(z)$ in the upper half plane $0<\arg(z)|<\pi$. Significantly, from (\ref{15}) it is bounded at $z=0$ too if $\Re(\mu) \geq -1$ and $\nu \geq 0$. Moreover, although unbounded at $z=0$ if $\Re(\mu)<-1$ it is still subdominant at this singularity relative to all other solutions except $J_{\nu}(z)$ when $\nu$ is sufficiently large.

Thus $S_{\mu,\nu}^{(1)}(z)$ is subdominant at the two singularities $z=0$ and $z=\infty \exp(\pi i/2)$, and this uniquely defines it for any fixed $\mu \in \mathbb{C}$ and $\nu \rightarrow \infty$. We remark that, unlike $S_{\mu,\nu}(z)$, it is not subdominant at $z=\infty \exp(-\pi i/2)$.

Another important observation is that $S_{\mu,\nu}^{(1)}(z)$ is defined for a larger range of parameters than $s_{\mu,\nu}(z)$. Specifically, it is defined for all parameter values except when both $\nu + \mu = -1,-3,-5,\cdots$ and $\nu - \mu \neq -1,-3,-5,\cdots$. Importantly it is well-defined for the parameter range we consider in this paper, namely $\mu$ fixed and $\nu$ large and positive.

If $\nu>\mu+1$ the leading terms in its series expansion at $z=0$ is the same as those for $s_{\mu,\nu}(z)$, given by (\ref{03}) and (\ref{04}), with $k$ running from $0$ to $K$, where $\mu+1+2K<\nu$. If $\nu -\mu \neq 1,3,5,\cdots$ subsequent terms come from (\ref{03}) and the series for $J_{\nu}(z)$ (\cite[Eq. 10.2.2]{NIST:DLMF}) inserted in (\ref{15}). 

If $\nu - \mu$ is an odd integer limits can be taken. Thus for $\nu=\mu+2m+1$ ($m \in \mathbb{N}$) we have
\begin{multline}  \label{16}
S_{\mu,\mu+2m+1}^{(1)}(z)=\lim_{\epsilon \rightarrow 0}
\Big\{ s_{\mu,\mu+2m+1+\epsilon}(z) \\
\left.  +(-1)^{m+1}e^{-\pi i \epsilon/2}
A(\mu,\mu+2m+1+\epsilon)J_{\mu+2m+1+\epsilon}(z)
\right\}.
\end{multline}
See also \cite[Sect. 10.73]{Watson:1944:TTB} for a similar approach for $S_{\mu,\nu}(z)$. For example, if $\Re(\nu)>\Re(\mu)+3$ we have from (\ref{03}), (\ref{04}) and (\ref{15}) as $z \rightarrow 0$
\begin{equation}  \label{17}
 S_{\mu,\nu}^{(1)}(z) = \frac{z^{\mu+1}}{(\mu+1)^{2}-\nu^{2}}
 +\mathcal{O}\left(z^{\mu+3}\right),
\end{equation}
whereas from (\ref{16}) we compute that if $\nu = \mu +1$ then
\begin{multline}  \label{18}
 S_{\mu,\mu+1}^{(1)}(z) =
 \frac {z^{\mu+1}\ln(z) }{2\left(\mu+1 \right)} \\
 -\frac {z^{\mu+1}}{4(\mu+1)} \left\{ 
\frac {2\mu+1}{\mu(\mu+1) } +2\ln(2) -\gamma
 +\psi(\mu) +i\pi\right\} 
+\mathcal{O}\left(z^{\mu+3}\ln(z)\right),
\end{multline}
where $\psi(z)$ is the logarithmic derivative of the Gamma function, and $\gamma$ is Euler’s constant (see \cite[Eqs. 5.2.2 and 5.2.3]{NIST:DLMF}).

As another example we have, again from (\ref{16}),
\begin{multline}  \label{19}
S_{3,6}^{(1)}(z)=
-\frac{1}{20}z^4
+\frac{1}{240}z^6\ln(z) \\
+\frac{1}{240}
\left(\gamma - \ln(2)  - \frac{49}{40}-\frac{\pi i}{2}\right)z^6
+\mathcal{O}\left(z^{8}\ln(z)\right)
\quad (z \rightarrow 0).
\end{multline}

Next we can construct an explicit integral representation similar to (\ref{10}). To this end, using \cite[Eq. 10.5.3]{NIST:DLMF}
\begin{equation}  \label{20}
\mathscr{W}\left\{J_{\nu}(z),H_{\nu}^{(1)}(z)\right\}
=2i/(\pi z),
\end{equation}
we find from (\ref{02}), (\ref{46}) and (\ref{46a}), and variation of parameters that, for $\Re(\mu + \nu)>-1$,
\begin{equation}  \label{21}
 S_{\mu,\nu}^{(1)}(z)
 =\frac{\pi i}{2}
 \left[J_{\nu}(z)\int_{\infty \exp(\pi i/2)}^z 
 t^{\mu}H_{\nu}^{(1)}(t)dt  -H_{\nu}^{(1)}(z)\int_{0}^z 
 t^{\mu}J_{\nu}(t)dt \right].
\end{equation}

The second new Lommel function is defined similarly to (\ref{13}) and (\ref{14}) by
\begin{multline}  \label{22}
S_{\mu,\nu}^{(2)}(z)
=S_{\mu,\nu}(z)+iA(\mu,\nu)
\cos\left(\tfrac{1}{2}(\mu-\nu)\pi\right)H_{\nu}^{(2)}(z) \\
= S_{\mu,\nu}(z)+\frac{i\pi 2^{\mu-1}\Gamma\left(\tfrac{1}{2}\mu+\tfrac{1}{2}\nu
+\tfrac{1}{2}\right)}{\Gamma\left(\tfrac{1}{2}\nu
-\tfrac{1}{2}\mu+\tfrac{1}{2}\right)}H_{\nu}^{(2)}(z),
\end{multline}
and then from (\ref{06}) we have
\begin{equation}  \label{23}
S_{\mu,\nu}^{(2)}(z)=s_{\mu,\nu}(z)+ie^{(\nu-\mu)\pi i/2}A(\mu,\nu)J_{\nu}(z).
\end{equation}
This has the characterising behaviour of being subdominant at $z=0$ and $z=\infty \exp(-\pi i/2)$. Similarly to (\ref{21}) we can express it in the form
\begin{equation}  \label{24}
 S_{\mu,\nu}^{(2)}(z)
 =\frac{\pi i}{2}
 \left[H_{\nu}^{(2)}(z)\int_{0}^z 
 t^{\mu}J_{\nu}(t)dt
 -J_{\nu}(z)\int_{\infty \exp(-\pi i/2)}^z 
 t^{\mu}H_{\nu}^{(2)}(t)dt
 \right],
\end{equation}
for $\Re(\mu + \nu)>-1$. Also note from (\ref{13}), (\ref{22}) and \cite[Eq. 10.11.9]{NIST:DLMF} that $\overline{S_{\mu,\nu}^{(1)}(\bar{z})}=S_{\mu,\nu}^{(2)}(z)$.

Our third new Lommel function is simply defined by
\begin{equation}  \label{34}
 S_{\mu,\nu}^{(0)}(z)=\tfrac{1}{2}
 \left\{S_{\mu,\nu}^{(1)}(z)+S_{\mu,\nu}^{(2)}(z)\right\}.
\end{equation}
We note that from (\ref{03}), (\ref{04}), (\ref{15}), (\ref{23}) and (\ref{34}) that for $j=0,1,2$
\begin{equation}  \label{34a}
S_{\mu,\nu}^{(j)}(z) \sim \frac{z^{\mu+1}}{(\mu+1)^{2}-\nu^{2}}
\quad (z \rightarrow 0, \, \Re(\nu) > \Re(\mu)+1),
\end{equation}
and therefore all three are subdominant particular solutions at $z=0$. 

Next, from (\ref{15}) and (\ref{23})
\begin{equation}  \label{35}
S_{\mu,\nu}^{(0)}(z)=s_{\mu,\nu}(z)+ A(\mu,\nu)\sin\left(\tfrac{1}{2}(\mu-\nu)\pi \right)J_{\nu}(z).
\end{equation}
Hence from (\ref{34}) $S_{\mu,\nu}^{(0)}(x) \in \mathbb{R}$ for $x>0$. Its advantage over $s_{\mu,\nu}(z)$ is that it is well-defined for the same large parameter values as its parents $S_{\mu,\nu}^{(1)}(z)$ and $S_{\mu,\nu}^{(2)}(z)$.

To obtain an integral representation we have from \cite[Eq. 10.22.44]{NIST:DLMF} the identity
\begin{equation}  \label{36}
\int_{0}^{\infty} t^{\mu}Y_{\nu}(t)dt
=\frac{2}{\pi}A(\mu,\nu)\sin\left(\tfrac{1}{2}(\mu-\nu)\pi \right)
\quad (\Re(\mu \pm \nu)>-1,\, \Re(\mu)<\tfrac{1}{2}).
\end{equation}
Therefore from (\ref{05}) and (\ref{35})
\begin{multline}  \label{37}
 S_{\mu,\nu}^{(0)}(z)
 =\frac{\pi}{2}
 \left[J_{\nu}(z)\int_{z}^{\infty}
 t^{\mu}Y_{\nu}(t)dt 
+ Y_{\nu}(z)\int_{0}^z 
 t^{\mu}J_{\nu}(t)dt \right] \\
\quad (\Re(\mu + \nu)>-1,\, \Re(\mu)<\tfrac{1}{2}).
\end{multline}

Our new functions do not satisfy the reflection formula (\ref{07a}), but instead, from that relation along with (\ref{07}), (\ref{13}), (\ref{07a}) and \cite[Eq. 10.4.6]{NIST:DLMF}, we obtain
\begin{equation}  \label{25}
S_{\mu,-\nu}^{(1)}(z)
=S_{\mu,\nu}^{(1)}(z)+e^{(\mu+\nu)\pi i/2}
\sin(\pi \nu)A(\mu,\nu)H_{\nu}^{(1)}(z),
\end{equation}
and similarly
\begin{equation}  \label{26}
S_{\mu,-\nu}^{(2)}(z)
=S_{\mu,\nu}^{(2)}(z)+e^{-(\mu+\nu)\pi i/2}
\sin(\pi \nu)A(\mu,\nu)H_{\nu}^{(2)}(z).
\end{equation}
From (\ref{34}), (\ref{25}) and (\ref{26}) we also have
\begin{multline}  \label{38}
S_{\mu,-\nu}^{(0)}(z)
=S_{\mu,\nu}^{(0)}(z)+
\sin(\pi \nu)A(\mu,\nu)  \\
\times \left\{\cos\left(\tfrac{1}{2}(\mu+\nu)\pi\right)J_{\nu}(z)
-\sin\left(\tfrac{1}{2}(\mu+\nu)\pi\right)Y_{\nu}(z)\right\}.
\end{multline}

Analytic continuation formulas are straightforward to find. Firstly from (\ref{15}) and \cite[Eq. 10.11.1]{NIST:DLMF} ($m \in \mathbb{Z}$)
\begin{equation}  \label{27}
J_{\nu}\left(ze^{m\pi i}\right)
=e^{m\nu\pi i}J_{\nu}(z),
\end{equation}
and
\begin{equation}  \label{28}
s_{\mu,\nu}\left(ze^{m\pi i}\right)
=e^{m (\mu+1) \pi i}s_{\mu,\nu}(z),
\end{equation}
we get
\begin{multline}  \label{29}
S_{\mu,\nu}^{(1)}\left(ze^{m\pi i}\right)
=e^{m (\mu +1)\pi i}S_{\mu,\nu}^{(1)}(z) \\
+e^{(\mu-\nu+1)\pi i/2}
\left\{e^{m (\mu +1)\pi i} 
-e^{m \nu\pi i}\right\}A(\mu,\nu)J_{\nu}(z),
\end{multline}
and likewise
\begin{multline}  \label{30}
S_{\mu,\nu}^{(2)}\left(ze^{m\pi i}\right)
=e^{m (\mu +1)\pi i}S_{\mu,\nu}^{(2)}(z) \\
+e^{-(\mu-\nu+1)\pi i/2}
\left\{e^{m (\mu +1)\pi i} 
-e^{m \nu\pi i}\right\}A(\mu,\nu)J_{\nu}(z).
\end{multline}
As special case, using (\ref{27}) and
\begin{equation}  \label{31}
S_{\mu,\nu}^{(1)}(z)
=S_{\mu,\nu}^{(2)}(z)-2iA(\mu,\nu)
\cos\left(\tfrac{1}{2}(\mu-\nu)\pi\right)J_{\nu}(z),
\end{equation}
which comes from (\ref{13}) and (\ref{22}), we get
\begin{equation}  \label{32}
S_{\mu,\nu}^{(1)}(ze^{\pi i})
=e^{(\mu +1)\pi i}S_{\mu,\nu}^{(2)}(z),
\end{equation}
and on replacing $z$ by $ze^{-\pi i}$
\begin{equation}  \label{33}
S_{\mu,\nu}^{(2)}(ze^{-\pi i})
=e^{-(\mu +1)\pi i}S_{\mu,\nu}^{(1)}(z).
\end{equation}

From (\ref{29}) and (\ref{30}) for $m \in \mathbb{Z}$
\begin{multline}  \label{39}
S_{\mu,\nu}^{(0)}(ze^{m\pi i})
=e^{m (\mu +1)\pi i}S_{\mu,\nu}^{(0)}(z) \\
+\sin\left(\tfrac{1}{2}(\mu-\nu)\pi\right)
\left\{e^{m \nu\pi i}-e^{m (\mu +1)\pi i} \right\}
A(\mu,\nu)J_{\nu}(z).
\end{multline}

Finally we note that an expression for $S_{\mu,\nu}(ze^{m\pi i})$ comes from this, \cite[Eq. 10.11.2]{NIST:DLMF} and
\begin{equation}  \label{40}
S_{\mu,\nu}(z) = S_{\mu,\nu}^{(0)}(z)
- \frac{\pi 2^{\mu-1}\Gamma\left(\tfrac{1}{2}\mu+\tfrac{1}{2}\nu
+\tfrac{1}{2}\right)}{\Gamma\left(\tfrac{1}{2}\nu
-\tfrac{1}{2}\mu+\tfrac{1}{2}\right)}Y_{\nu}(z),
\end{equation}
which itself follows from (\ref{14}), (\ref{22}) and (\ref{34}).

\section{Asymptotic expansions for Lommel functions} 
\label{sec3}
Under the transformation $w=\nu^{-\mu-(3/2)} z^{1/2}y$, followed by $z \rightarrow \nu z$, (\ref{02}) then becomes
\begin{equation}  \label{61}
\frac{d^{2}w}{dz^{2}}+\left\{ \nu^{2}\left ( \frac{z^{2}-1}{z^{2}} \right)+\frac{1}{4z^{2}} \right \}w=z^{\mu-\frac{1}{2}},
\end{equation}
which has homogeneous solutions $z^{1/2}\mathscr{C}_{\nu}(\nu z)$ where $\mathscr{C}= J$, $Y$ or $H^{(j)}$ ($j=1,2$), and particular solutions $\nu^{-\mu-1} z^{1/2}\mathscr{S}_{\mu,\nu}(\nu z)$ where $\mathscr{S}=S$, $s$, or $S^{(j)}$ ($j=0,1,2$). Note the turning points at $z=\pm 1$. 

Uniform asymptotic expansions for these solutions involve Airy functions and Scorer functions, described below, and also the variables $\xi$ and $\zeta$ defined by \cite[Chap. 11, Eq. (10.05)]{Olver:1997:ASF}
\begin{equation}  \label{62}
\xi=\frac{2}{3}\zeta^{3/2}=\ln\left \{ \frac{1+\left ( 1-z^{2} \right )^{1/2}}{z} \right \}-\left ( 1-z^{2} \right )^{1/2}.
\end{equation}
The branches for $\zeta$ are such that $\zeta \in [0,\infty)$ when $z \in (0,1]$, and is continuous elsewhere in the cut plane $|\arg(z)|<\pi$. Thus $\zeta=+\infty,0,-\infty$ correspond to $z=0,1,+\infty$, respectively. In particular, the turning point at $z=1$ is mapped to a turning point at $\zeta=0$. Furthermore $\zeta $ is an analytic function of $z$ in the cut plane $|\arg(z)| < \pi$, in particular at the turning point $z=1$.

Unlike $\zeta$ the variable $\xi $ has a branch point at $z=1$. Any branch can be chosen in our subsequent expansions, as long as it is used consistently throughout the approximation in question. For a detailed description of the $z - \zeta$ transformation see \cite[Chap. 11, Sect. 10]{Olver:1997:ASF}.

\begin{figure}[hthp]
  \centering
  \includegraphics[trim={0 160 0 160},width=0.9\textwidth,keepaspectratio]{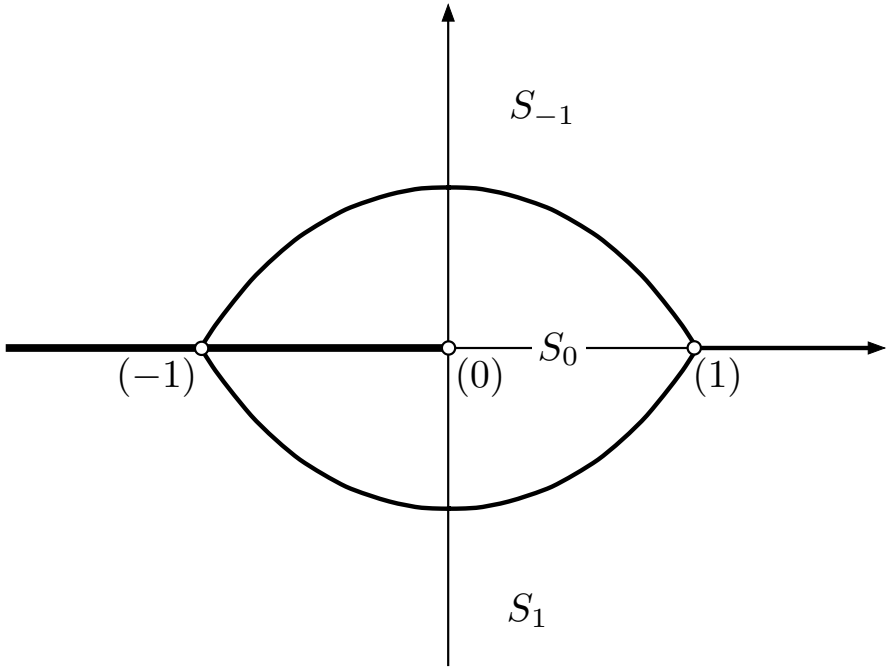}
  \caption{Regions $S_{j}$ ($j=0,\pm 1$)}
  \label{fig:fig1}
\end{figure}

Three regions $S_{j}$ ($j=0,\pm 1$) are depicted in \cref{fig:fig1}. The boundaries are the level curves $\Re(\xi)=0$ emanating from $z=\pm 1$. We also indicate a branch cut along $-\infty<z\leq 0$ associated with the multi-valued functions we are dealing with. The bounded eye-shaped region $S_{0}$ is the one in which $J_{\nu}(\nu z)$ is bounded as $\nu \rightarrow \infty$, and likewise $H_{\nu}^{(1)}(\nu z)$ (respectively $H_{\nu}^{(1)}(\nu z)$) is bounded in $S_{-1}$ (respectively $S_{1}$). All three Bessel functions are unbounded outside these regions. The subscripts $0,\pm 1$ correspond the the Airy function used to approximate each of these (see (\ref{79}) - (\ref{82}) below).

We next define $S(\delta)$ to be the the cut principal plane $-\pi < \arg(z) \leq \pi$ with all points removed that are at a distance less than $\delta$ from the interval $-\infty < z \leq -1$. Many of our approximations will be uniformly valid in this region. \textit{If a solution is unbounded at $z=0$ we tacitly assume this point must be excluded from the region of validity, without explicitly stating this}. Whether the solutions in question are bounded or not at $z=0$, we emphasise that all expansions for $z \in S(\delta)$ are uniformly valid arbitrarily close to this singularity. The same applies to $S^{(-1,0)}(\delta)$ and $S^{(-1,0)}(\delta)$ defined later. 

The region $S(\delta)$ is shown in \cref{fig:fig2}, along with several level curves $\Re(\xi)=\mathrm{constant}$. These level curves are crucial in determining regions of validity. This region is where the expansions involving Airy and Scorer are valid, which approximate homogeneous and inhomogeneous asymptotic solutions at the turning point $z=1$. 

For a point $z$ to be in $S(\delta)$ it is necessary and sufficient that there exist so-called progressive paths that link it to at least two of the three singularities at $z=0$ and $z=c \pm i \infty$ ($c$ any real constant). Each path must consist of a union of $R_{2}$ arcs (see \cite[Chap. 5, Sect. 3.3]{Olver:1997:ASF}), and as a point $t$ (say) moves along this path from one singularity to the other $\Re(\xi(t))$ must vary continuously and monotonically. We also require that the path not get too close to $z=-1$, specifically $|t+1|\geq \delta$ for all points on the path. 

The branch of $\xi(z)$ at $z=1$ can be chosen appropriately so that the continuity requirement is met for all points in $S(\delta)$. For example, if $z$ lies on $(1,\infty)$ we can choose the branch cut for $\xi$ to be $(-\infty,1]$ and to be continuous elsewhere in this cut plane, and two suitable paths to be vertical lines from $z$ to $z \pm i \infty$. Although not required, we can also choose a progressive path linking $z$ to $0$ by simply taking it to be the real line segment $[0,z]$. In all three cases $\Re(\xi)$ increases or decreases monotonically along the $R_{2}$ path, depending on which branches are taken in (\ref{62}); the actual choice does not matter. Thus all points lying on $(1,\infty)$ certainly lie in $S(\delta)$.

As another example, suppose $z$ lies on the cut $(-\infty,0]$ with $\arg(z)=\pi$. If $|z| < 1-\delta$ we can again take the branch cut for $\xi$ to be $(-\infty,1]$ and choose either a horizontal path linking the point to $0$, or a vertical path linking the point to $z + i \infty$. In addition, a suitable path linking it to $\infty$ in the lower half plane can be constructed by starting at $z$ and following the oval-like level curve it lies on, in a negative direction around the origin, until we hit the negative imaginary axis, and then from there follow the imaginary axis to $-i \infty$. For all three paths $\xi$ varies monotonically, and hence $z$ lies in $S(\delta)$.

On the other hand, if $\arg(z)=\pi$ but now $|z| \geq 1-\delta$, then by examining the level curves in \cref{fig:fig2} we see that any progressive path linking $z$ to $0$, or to infinity in the lower half plane, must pass through $z=-1$ in order to maintain the monotonicity requirement. Hence these paths violate our requirement of maintaining a minimum distance of $\delta$ to $z=-1$, and thus all such points must be excluded (and similarly those just below the cut when $|z| \geq 1-\delta$). Indeed all points sufficiently close to the interval $(-\infty,-1]$ must be excluded for the same reason.

However, points on the boundary of the shadow region in \cref{fig:fig2} meet the progressive path requirement, because it is readily seen that suitable progressive paths can be chosen linking these points to $0$, $z+i \infty$ (and also $z-i \infty$) in such a way that its distance from $z=-1$ is greater than or equal to $\delta$ along the path. This explains our requirement that all points in $S(\delta)$ are no closer than a distance $\delta$ from the interval $(-\infty,-1]$. This requirement could be relaxed somewhat, using certain level curves as boundaries in place of our horizontal lines, but this would be an unnecessary complication.

\begin{figure}[hthp]
  \centering
  \includegraphics[trim={20 160 20 160},width=1.0\textwidth,keepaspectratio]{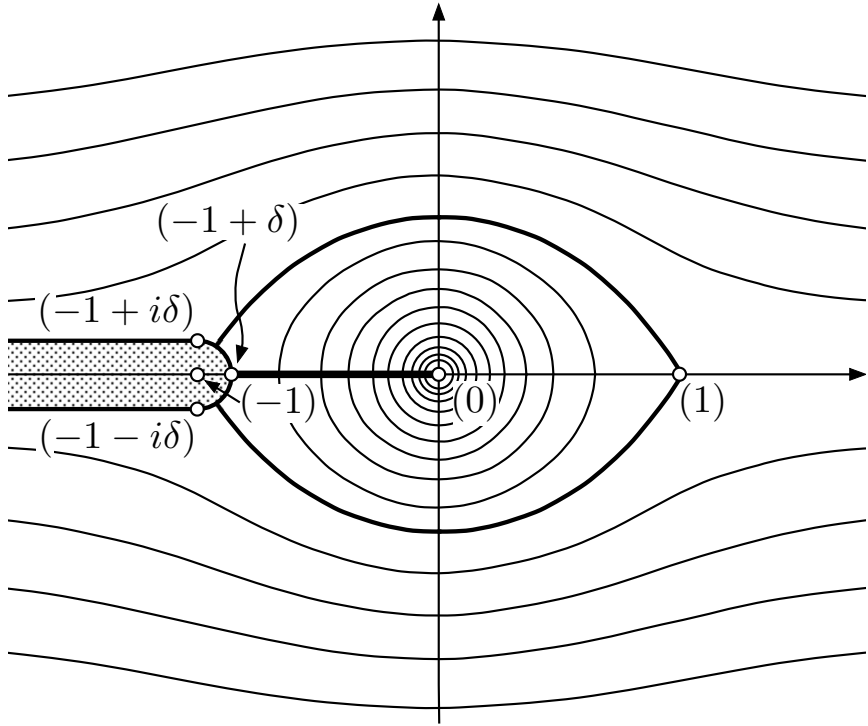}
  \caption{Region $S(\delta)$ and level curves $\Re(\xi)=\mathrm{constant}$}
  \label{fig:fig2}
\end{figure}

For $(j,k)=(-1,0)$, $(0,1)$ and $(-1,1)$ we define $S^{(j,k)}(\delta) \subset S(\delta)$ as the region consisting of points in $S_{j} \cup S_{k}$  whose distance from $S_{l}$ ($l\neq j,k$) is greater than or equal to $\delta$. The region $S^{(-1,0)}(\delta)$ is the unshaded region depicted in \cref{fig:fig2}, with $S^{(0,1)}(\delta)$ being the conjugate of this. The region $S^{(-1,1)}(\delta)$ is the unshaded region depicted in \cref{fig:fig3}. Note that both the turning points $z=\pm 1$ are excluded from all three of these. 

Each $S^{(j,k)}(\delta)$ is a region of validity for an asymptotic particular solution that is subdominant in this region. Their expansions are simpler since they are not valid at the turning point $z=1$ (and also $z=-1$), and consequently only require elementary functions. The regions are defined similarly to $S(\delta)$ in terms of progressive paths. The difference is that $S^{(j,k)}(\delta)$ is a subset of $S_{j} \cup S_{k}$, and the progressive path passing through a point in the region must keep a distance $\delta$ from \textit{both} turning points $\pm 1$. Also, the end points are pre-determined since they must be the two singularities in $S_{j} \cup S_{k}$.

\begin{figure}[hthp]
  \centering
  \includegraphics[trim={0 160 0 160},width=0.9\textwidth,keepaspectratio]{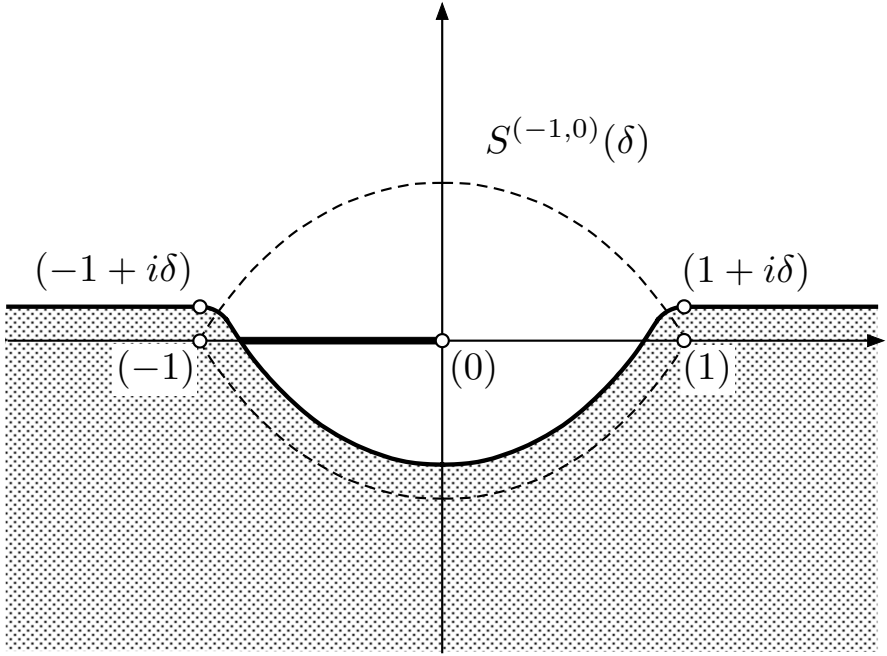}
  \caption{Region $S^{(-1,0)}(\delta)$}
  \label{fig:fig3}
\end{figure}

\begin{figure}[hthp]
  \centering
  \includegraphics[trim={0 160 0 160},width=0.9\textwidth,keepaspectratio]{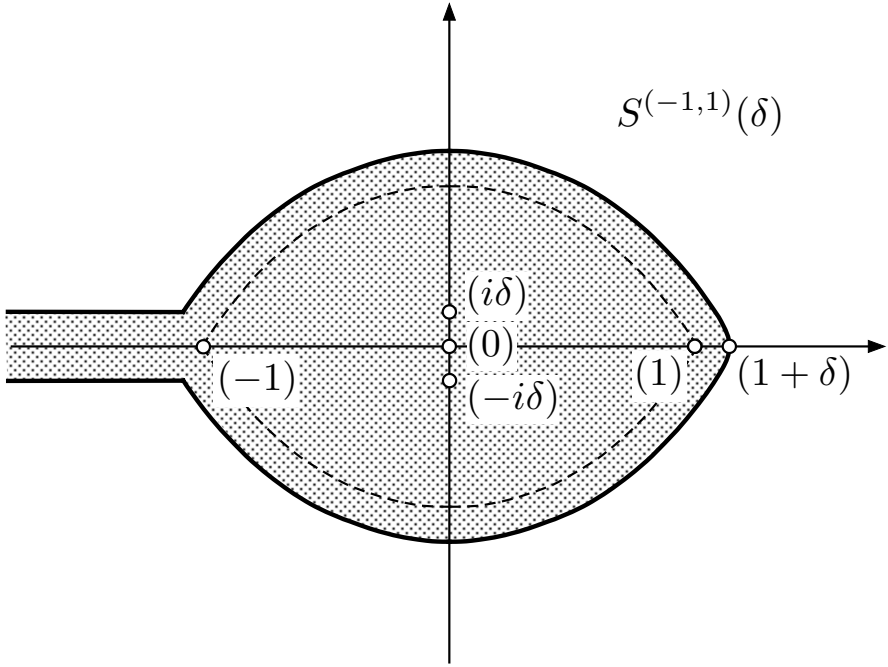}
  \caption{Region $S^{(-1,1)}(\delta)$}
  \label{fig:fig4}
\end{figure}

Our asymptotic expansions valid at $z=1$ involve solutions of the inhomogeneous Airy equation
\begin{equation}  \label{63}
\frac{d^{2}w}{dz^{2}}-zw=1.
\end{equation}
This has homogeneous solutions the Airy functions 
\begin{equation}  \label{63a}
\mathrm{Ai}_{l}(z)=\mathrm{Ai}(z e^{-2\pi i l/3})
\quad (l=0,\pm 1).
\end{equation}
For large $\nu$ each $\mathrm{Ai}_{l}(\nu^{2/3}\zeta)$ ($l=0,\pm 1$) is bounded in $S_{l}$ and is unbounded outside this region.

Particular solutions of (\ref{63}) are
\begin{equation}  \label{64}
 \mathrm{Wi}^{(-1,1)}(z)=\pi \mathrm{Hi}(z),
\end{equation}
 \begin{equation}  \label{65}
 \mathrm{Wi}^{(0,1)}(z)=\pi e^{-2\pi i/3}\mathrm{Hi}\left(ze^{-2\pi i/3}\right),
\end{equation}
and
\begin{equation}  \label{66}
 \mathrm{Wi}^{(-1,0)}(z)=\pi e^{2\pi i/3}\mathrm{Hi}
 \left(ze^{2\pi i/3} 
\right),
\end{equation}
where $\mathrm{Hi}(z)$ is the Scorer function defined by \cite[Sect. 9.12]{NIST:DLMF}
\begin{equation}  \label{67}
\mathrm{Hi}(z)=\frac{1}{\pi }\int_{0}^{\infty} \exp  \left(-\tfrac{1}{3} t^{3}+z t \right)dt.
\end{equation}

$\mathrm{Wi}^{(-1,1)}(z)$ is the uniquely defined particular solution of the inhomogeneous Airy equation (\ref{63}) having the behaviour
\begin{equation}  \label{68}
\mathrm{Wi}^{(-1,1)}(z) \sim -1/z \quad \left(z\to \infty,\; \left| \arg(-z) \right|\le \tfrac{2}{3}\pi -\delta \right).
\end{equation}
This function grows exponentially as $z \rightarrow \infty$ in $|\arg(z)|\le \tfrac{1}{3}\pi -\delta$ (see \cite[Eq. 9.12.29]{NIST:DLMF}). Similarly of course for $\mathrm{Wi}^{(0,1)}(z)$ and $\mathrm{Wi}^{(-1,0)}(z)$ in rotated sectors. From this one can show for $(j,k)=(-1,0)$, $(0,1)$ and $(-1,1)$, and $\zeta$ given by (\ref{62}), that $\mathrm{Wi}^{(j,k)}(\nu^{2/3}\zeta)$ has the fundamental property of being bounded in $S_{j} \cup S_{k}$ and unbounded in $\mathbb{C} \setminus \left\{S_{j} \cup S_{k}\right\}$ as $\nu \rightarrow \infty$. We shall shortly see that these play an important role in approximating particular solutions of (\ref{02}) that are subdominant in the same regions, namely the Lommel functions $S_{\mu,\nu}(z)$, $S_{\mu,\nu}^{(1)}(z)$ and $S_{\mu,\nu}^{(2)}(z)$.

Let us now set up the coefficients that will appear in our expansions. These require repeated integration, and to facilitate these we introduce the variable
\begin{equation}  \label{69}
\beta=\frac{1}{\sqrt{1-z^{2}}}.
\end{equation}
Here the branch of the square root is positive for $-1<z<1$ and is continuous in the plane having a cuts along $(-\infty,-1]$ and $[1,\infty)$. Note that $\beta \rightarrow 0$ as $z \rightarrow \infty$. From (\ref{69}) and \cite[Eqs. (5.6) -- (5.8)]{Dunster:2020:SEB} (in a slightly modified form) we then define a sequence of polynomials via
\begin{equation}  \label{70}
\mathrm{E}_{1}(\beta)=\tfrac{1}{24}\beta
\left(5\beta^{2}-3\right),
\end{equation}
\begin{equation}  \label{71}
\mathrm{E}_{2}(\beta)=
\tfrac{1}{16}\beta^{2}\left(\beta^{2}-1\right)
\left(5\beta^{2}-1\right),
\end{equation}
and for $s=2,3,4\cdots$
\begin{equation}  \label{72}
\mathrm{E}_{s+1}(\beta) =
\frac{1}{2} \beta^{2} \left(\beta^{2}-1 \right)\mathrm{E}_{s}^{\prime}(\beta)
+\frac{1}{2}\int_{0}^{\beta}
p^{2}\left(p^{2}-1 \right)
\sum\limits_{j=1}^{s-1}
\mathrm{E}_{j}^{\prime}(p)
\mathrm{E}_{s-j}^{\prime}(p) dp.
\end{equation}

We next define coefficient functions from \cite[Eqs. (1.16) -- (1.18)]{Dunster:2020:SEB}. These involve two sequences of numbers given by $\left\{a_{s}\right\}_{s=1}^{\infty}$ and $\left\{\tilde{a}_{s}\right\}_{s=1}^{\infty}$, where $a_{1}=a_{2}=\frac{5}{72}$, $\tilde{a}_{1}=\tilde{a}_{2}=-\frac{7}{72}$, and subsequent terms $a_{s}$ and $\tilde{{a}}_{s}$ ($s=3,4,5,\cdots $) satisfying the same recursion formula
\begin{equation}  \label{73}
b_{s+1}=\frac{1}{2}\left(s+1\right) b_{s}+\frac{1}{2}
\sum\limits_{j=1}^{s-1}{b_{j}b_{s-j}} 
\quad (s=2,3,4,\cdots).
\end{equation}

Next for $s=1,2,3,\cdots$ define
\begin{equation}  \label{74}
\mathcal{E}_{s}(z) =\mathrm{E}_{s}(\beta) +
(-1)^{s}a_{s}s^{-1}\xi^{-s},
\end{equation}
and
\begin{equation}  \label{75}
\tilde{\mathcal{E}}_{s}(z) =\mathrm{E}_{s}(\beta)
+(-1)^{s}\tilde{a}_{s}s^{-1}\xi^{-s}.
\end{equation}
From \cite[Thm. 3.4]{Dunster:2020:SEB} homogeneous asymptotic solutions of (\ref{61}) are then given by $w(\nu,z)=z^{1/2}w_{l}(\nu,z)$, where
\begin{equation}  \label{79}
w_{l}(\nu,z)=\mathrm{Ai}_{l} \left(\nu^{2/3}\zeta \right)\mathcal{A}(\nu,z)+\mathrm{Ai}'_{l} \left(\nu^{2/3}\zeta \right)\mathcal{B}(\nu,z)\quad (l=0,\pm 1),
\end{equation}
in which the Airy functions of complex argument are defined by (\ref{63a}).

The coefficient functions $\mathcal{A}(\nu,z)$ and $\mathcal{B}(\nu,z)$ are slowly varying in $\nu$ and are analytic in the cut plane $\mathbb{C} \setminus (-\infty,-1]$. As $\nu \rightarrow \infty$, uniformly for $z \in S(\delta) \setminus\left\{1\right\}$ they possess the asymptotic expansions
\begin{equation}  \label{76}
\mathcal{A}(\nu,z) \sim \phi(z)\exp \left\{ \sum\limits_{s=1}^{\infty}\frac{
\tilde{\mathcal{E}}_{2s}(z) }{\nu^{2s}}\right\} \cosh \left\{ \sum\limits_{s=0}^{\infty}\frac{\tilde{\mathcal{E}}_{2s+1}(z) }{\nu^{2s+1}}\right\},
\end{equation}
and
\begin{equation}  \label{77}
\mathcal{B}(\nu,z) \sim \frac{\phi(z)}{\nu^{1/3}\sqrt{\zeta}}\exp \left\{ \sum\limits_{s=1}^{\infty}\frac{
\mathcal{E}_{2s}(z) }{\nu^{2s}}\right\} \sinh \left\{ \sum\limits_{s=0}^{\infty}\frac{\mathcal{E}_{2s+1}(z) }{\nu^{2s+1}}\right\},
\end{equation}
where
\begin{equation}  \label{78}
\phi(z)=\left(\frac{\zeta}{1-z^2}\right)^{1/4}.
\end{equation}
Note that $\phi(z)$ has a removable singularity at the turning point $z=1$ since $\zeta$, as defined by (\ref{62}), has a simple zero at this point.

Error bounds are provided in \cite[Thms. 3.4 and 4.2]{Dunster:2020:SEB}. Note here (for later convenience) our notation is slightly different in that we have omitted a factor $z^{1/2}$ in (\ref{78}), so that the asymptotic solutions are $z^{1/2}w_{l}(\nu,z)$ rather than $w_{l}(\nu,z)$. We also do not have the truncation integer $m$ dependence on $\mathcal{A}$ and $\mathcal{B}$ since we will not include the error terms here. We should emphasise that all expansions in this paper are rigorously established by the error bounds of the general results that we use from \cite{Dunster:2020:ASI} and \cite{Dunster:2020:SEB}.

As shown in \cite{Dunster:2017:COA} the expansions (\ref{76}) and (\ref{77}) are very accurate and easy to compute. We remark that $\mathcal{E}_{2s}(z)$ and $\tilde{\mathcal{E}}_{2s}(z)$ are not analytic at  $z=1$, thus the expansions cannot be used directly at or near this point. However both $\mathcal{A}(\nu,z)$ and $\mathcal{B}(\nu,z)$ are themselves analytic at $z=1$, and there are two ways to compute them near, and at, this turning point. Firstly, if many terms are required for high accuracy, Cauchy's integral formula can be used, as given in \cite[Thm. 4.2]{Dunster:2020:SEB} and \cite[Eqs. (19) and (20)]{Dunster:2020:ASI}. In this the expansions (\ref{76}) and (\ref{77}) for $\mathcal{A}(\nu,z)$ and $\mathcal{B}(\nu,z)$ are inserting into the Cauchy integral representation of these functions, and since they are valid on the contour of integration that encloses $z=1$ this allows accurate and stable computation of each function.

On the other hand, if only a few terms in an expansion are required then (\ref{76}) and (\ref{77}) can be expanded in a traditional asymptotic series involving inverse powers of $\nu^{2}$. We find that each coefficient has a removable singularity at $z=1$ and can be computed via a Taylor series. The same is true for the expansions for inhomogeneous solutions that involve the Scorer functions (see (\ref{93}) - (\ref{95}) below). \textit{From now on we shall state that these expansions are uniformly valid for $z \in S(\delta)$ with the understanding that if $z$ is close to $1$ then Cauchy's integral formula or a re-expansion, as described above, can be used to evaluate the coefficient functions}.

The following theorem \cite[Thm. 3.1]{Dunster:2021:NKF} generalises the fact that coefficients in such a re-expansion have a removable singularity at a singularity, and the resulting expansions are uniformly valid in a neighbourhood of that point. We state it here since we shall use this later.
\begin{theorem} \label{thm:nopoles}
Let $0<\rho_{1}<\rho_{2}$, $\nu>0$, $z_{0}\in \mathbb{C}$, $H(\nu,z)$ be an analytic function of $z$ in the open disk $D=\{z:\, |z-z_{0}|<\rho_{2}\}$, and $h_{s}(z)$ ($s=0,1,2,\cdots$) be a sequence of functions that are analytic in $D$ except possibly for an isolated singularity at $z=z_{0}$. If $H(\nu,z)$ is known to possess the asymptotic expansion
\begin{equation}  \label{109}
H(\nu,z) \sim \sum\limits_{s=0}^{\infty}\frac{h_{s}(z)}{\nu^{s}}
\quad (\nu \rightarrow \infty),
\end{equation}
in the annulus $\rho_{1}<|z-z_{0}|<\rho_{2}$, then $z_{0}$ is an ordinary point or a removable singularity for each $h_{s}(z)$, and the expansion (\ref{109}) actually holds for all $z \in D$ (with $h_{s}(z_{0})$ defined by the limit of this function at $z_{0}$ if it is a removable singularity).
\end{theorem}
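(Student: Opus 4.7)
The plan is to recover the coefficient functions $h_{s}(z)$ throughout the full disk $D$ via Cauchy's integral formula, and to show by a residue computation that this representation automatically selects only the analytic part of the Laurent expansion of each $h_{s}$ at $z_{0}$. Fix any $r$ with $\rho_{1}<r<\rho_{2}$ and let $C_{r}$ denote the positively oriented circle $|w-z_{0}|=r$. Since $H(\nu,\cdot)$ is analytic on all of $D$, Cauchy's formula gives
\begin{equation*}
H(\nu,z)=\frac{1}{2\pi i}\oint_{C_{r}}\frac{H(\nu,w)}{w-z}\,dw
\quad (|z-z_{0}|<r).
\end{equation*}
The expansion (\ref{109}), understood in the usual sense of being valid uniformly on compact subsets of the annulus, supplies bounds $|R_{N}(\nu,w)|\le M_{N}(r)\nu^{-N}$ uniformly for $w\in C_{r}$, where $R_{N}(\nu,w)=H(\nu,w)-\sum_{s=0}^{N-1}h_{s}(w)\nu^{-s}$.

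Substituting this remainder decomposition into the Cauchy integral and interchanging the finite sum with the integral yields
\begin{equation*}
H(\nu,z)=\sum_{s=0}^{N-1}\frac{\tilde{h}_{s}(z)}{\nu^{s}}+\mathcal{O}(\nu^{-N}),
\quad
\tilde{h}_{s}(z):=\frac{1}{2\pi i}\oint_{C_{r}}\frac{h_{s}(w)}{w-z}\,dw,
\end{equation*}
with the $\mathcal{O}$-term uniform for $z$ in compact subsets of $|z-z_{0}|<r$. The key observation is that $\tilde{h}_{s}$ is analytic on the entire open disk $|z-z_{0}|<r$, with no singularity at $z_{0}$: writing the Laurent expansion $h_{s}(w)=\sum_{k\in\mathbb{Z}}c_{k,s}(w-z_{0})^{k}$ valid in $0<|w-z_{0}|<\rho_{2}$, a term-by-term residue calculation gives $\tilde{h}_{s}(z)=\sum_{k\ge 0}c_{k,s}(z-z_{0})^{k}$, which is precisely the analytic part of the Laurent series at $z_{0}$.

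To conclude, apply the uniqueness of asymptotic expansions to the two expansions $H(\nu,z)\sim\sum\tilde{h}_{s}(z)/\nu^{s}$ and $H(\nu,z)\sim\sum h_{s}(z)/\nu^{s}$ on the overlap annulus $\rho_{1}<|z-z_{0}|<r$: this forces $\tilde{h}_{s}=h_{s}$ there, and analytic continuation on the punctured disk extends the equality to $0<|z-z_{0}|<\rho_{2}$. Hence each $h_{s}$ has identically vanishing singular part at $z_{0}$, so $z_{0}$ is an ordinary point or removable singularity, and letting $r\uparrow\rho_{2}$ promotes the derived expansion to all of $D$. The main obstacle is ensuring that the remainder bound on $C_{r}$ is uniform enough in $w$ to justify the term-by-term integration; this is implicit in the assertion that (\ref{109}) is valid in the annulus, but needs to be invoked explicitly. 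The rest is the standard identification of $(2\pi i)^{-1}\oint_{C_{r}}f(w)(w-z)^{-1}\,dw$ with the analytic part of the Laurent series of $f$ at $z_{0}$.
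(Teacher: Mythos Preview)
The paper does not supply its own proof of this theorem; it is quoted from \cite[Thm.~3.1]{Dunster:2021:NKF} and stated without argument. Your proof via Cauchy's integral formula is correct and self-contained: the identification of $\tilde h_{s}$ with the analytic part of the Laurent expansion of $h_{s}$ at $z_{0}$, followed by uniqueness of asymptotic expansions on the overlap annulus, cleanly forces the singular part to vanish. The one assumption you flag---uniformity of the remainder bound on the circle $C_{r}$---is indeed the standard reading of ``(\ref{109}) holds in the annulus'' and is exactly what is needed.

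It is worth noting that your method is very much in the spirit of how the paper actually \emph{uses} this theorem: the Cauchy-integral device you employ is the same one invoked (see the discussion surrounding \cite[Thm.~4.2]{Dunster:2020:SEB} and \cite[Eq.~(121)]{Dunster:2020:ASI}) to evaluate the slowly varying coefficients $\mathcal{A}(\nu,z)$, $\mathcal{B}(\nu,z)$, $\mathcal{G}_{\mu}(\nu,z)$ near the turning point $z=1$, where their series representations have apparent singularities. So while there is no proof in the present paper to compare against, your argument recovers precisely the mechanism the paper relies on.
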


From (\ref{46}), (\ref{46a}), (\ref{46b}) and (\ref{79}) we identify homogeneous solutions of (\ref{61}) that are recessive at $z=0$ and $z=\infty$ for $0<\arg(z)<\pi$ and $-\pi<\arg(z)<0$, and this yields
\begin{equation}  \label{80}
J_\nu(\nu z) \sim \sqrt{2} \nu^{-1/3}w_{0}(\nu,z),
\end{equation}
\begin{equation}  \label{81} 
H_\nu^{(1)}(\nu z) \sim 
2^{3/2} e^{-\pi i/3} \nu^{-1/3} w_{-1}(\nu,z),
\end{equation}
and
\begin{equation}  \label{82} 
H_\nu^{(2)}(\nu z) \sim 
2^{3/2} e^{\pi i/3} \nu^{-1/3} w_{1}(\nu,z).
\end{equation}
The proportionality constants follow from matching the solutions at $z=\infty$ (see \cite[Chap. 11]{Olver:1997:ASF}. We shall use these expansions in what follows.

Let us now turn to the main part of the paper, namely inhomogeneous asymptotic solutions of (\ref{61}). From \cite[Thm. 4]{Dunster:2020:ASI} these involve coefficients defined by
\begin{equation}  \label{83}
G_{\mu,0}(z)=
\frac{z^{\mu+\frac{3}{2}}}{z^{2}-1},
\end{equation}
and for $s=0,1,2,\cdots$
\begin{equation}  \label{84}
G_{\mu,s+1}(z)
=\frac{G_{\mu,s}(z)+4z^2G_{\mu,s}''(z)}{4\left(1-z^{2}\right)}.
\end{equation}
Here we introduce the subscript to signify $\mu$ dependence. From the theorem we have for $(j,k)=(-1,0)$, $(0,1)$ and $(-1,1)$ three fundamental asymptotic particular solutions given by
\begin{equation}  \label{85}
w_{\mu}^{(j,k)}(\nu,z) \sim \frac{1}{\nu^{2}}\sum\limits_{s=0}^{\infty} \frac{G_{\mu,s}(z)}{\nu^{2s}},
\end{equation}
as $\nu \rightarrow \infty$, $\mu \in \mathbb{C}$ bounded, uniformly for $z \in S^{(j,k)}(\delta)$. If $\Re(\mu)<-1$ then the singularity $z=0$ must be excluded for $(j,k)=(-1,0)$ and $(0,1)$, but the expansions remain uniformly valid arbitrarily close to this point. This is true throughout this section, in particular for (\ref{95}) below.

All these assertions come from the error bounds provided by \cite[Eqs. (52) and (53)]{Dunster:2020:ASI} with $\hat{G}_{s}(z)=G_{\mu,s}(z)$, $f(z)=(1-z^2)/z^2$ and
\begin{equation}  \label{86}
\Phi(z)=\frac {z^{2} \left(z^{2}+4 \right) }
{4\left( z^2-1 \right)^{3}}.
\end{equation}
In the event of $\Re(\mu)<-\frac{3}{2}$ all particular solutions $w_{\mu}^{(j,k)}(\nu,z)$ are unbounded at $z=0$, and the integrals in the error bounds diverge at $z=0$. However, in this case \cite[Thm. 3]{Dunster:2020:ASI} can be used instead, verifying that the expansions (\ref{85}) remain valid uniformly for $z \in S^{(j,k)}(\delta)$ (with $z \neq 0$), as asserted.

If we truncate the series in (\ref{85}) and replace the upper limit by $n-1$, then from the error bounds the difference of the finite series and the exact solution is $\mathcal{O}(\nu^{-2n-2})$ as as $\nu \rightarrow \infty$, uniformly for $z \in S^{(j,k)}(\delta)$. Moreover as $z \rightarrow \infty$ this error term is $\mathcal{O}(z^{\mu-2n-(1/2)})$, again for $z \in S^{(j,k)}(\delta)$.

For $(j,k)= (-1,0)$ and $(0,1)$ this error term is also $\mathcal{O}(z^{\mu+(3/2)})$ as $z \rightarrow 0$. This cannot be deduced directly from the error bounds, but is implicitly true because each coefficient $G_{\mu,s}(z)$ is $\mathcal{O}(z^{\mu +(3/2)})$, and so is $w_{\mu}^{(j,k)}(\nu,z)$ for the aforementioned values of $(j,k)$ (see (\ref{88}) and (\ref{89}) below), and the error term is the difference of $w_{\mu}^{(j,k)}(\nu,z)$ and a finite sum of terms involving $G_{\mu,s}(z)$.

We now identify the unique doubly subdominant solutions.
\begin{theorem} \label{thm:SG}
As $\nu \rightarrow \infty$ with $\mu$ fixed, uniformly in the stated regions
\begin{equation}  \label{87}
S_{\mu,\nu}(\nu z)
\sim \frac{\nu^{\mu}}{z^{1/2}}\sum\limits_{s=0}^{\infty} \frac{G_{\mu,s}(z)}{\nu^{2s+1}}
\quad (z \in S^{(-1,1)}(\delta)),
\end{equation}
\begin{equation}  \label{88}
S_{\mu,\nu}^{(1)}(\nu z) 
\sim \frac{\nu^{\mu}}{z^{1/2}}\sum\limits_{s=0}^{\infty} \frac{G_{\mu,s}(z)}{\nu^{2s+1}}
\quad (z \in S^{(-1,0)}(\delta)),
\end{equation}
\begin{equation}  \label{89}
S_{\mu,\nu}^{(2)}(\nu z) 
\sim \frac{\nu^{\mu}}{z^{1/2}}\sum\limits_{s=0}^{\infty} \frac{G_{\mu,s}(z)}{\nu^{2s+1}}
\quad (z \in S^{(0,1)}(\delta)),
\end{equation}
and
\begin{equation}  \label{89aa}
S_{\mu,\nu}^{(0)}(\nu z) 
\sim \frac{\nu^{\mu}}{z^{1/2}}\sum\limits_{s=0}^{\infty} \frac{G_{\mu,s}(z)}{\nu^{2s+1}}
\quad (z \in S^{(-1,0)}(\delta) \cap S^{(0,1)}(\delta)).
\end{equation}
\end{theorem}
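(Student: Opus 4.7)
The plan is to realise each of the transformed Lommel functions as one of the fundamental doubly-subdominant particular solutions $w_{\mu}^{(j,k)}(\nu,z)$ of (\ref{61}), whose asymptotic expansion is given by (\ref{85}), and then revert the Liouville-type transformation. First I would set $W(z)=\nu^{-\mu-1}z^{1/2}\mathscr{S}_{\mu,\nu}(\nu z)$ so that, by the derivation preceding (\ref{61}), $W$ is a particular solution of (\ref{61}) whenever $\mathscr{S}_{\mu,\nu}$ is a particular solution of (\ref{02}). I would then identify the pair $(j,k)$ of singularities at which each such $W$ is subdominant. By (\ref{08}) and (\ref{46a})--(\ref{46b}), $S_{\mu,\nu}(z)$ grows only algebraically at $z=\infty e^{\pm\pi i/2}$, while $H_{\nu}^{(1)}(z)$ and $H_{\nu}^{(2)}(z)$ are exponentially large (in $\nu$) at the opposite imaginary infinities; the correspondence (\ref{80})--(\ref{82}) between the Airy regions $S_{-1},S_{0},S_{1}$ and the recessive singularities of $H_{\nu}^{(1)}(\nu z),J_{\nu}(\nu z),H_{\nu}^{(2)}(\nu z)$ then pairs these two singularities with $(j,k)=(-1,1)$. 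From (\ref{15})--(\ref{17}), (\ref{46}) and (\ref{46a}), together with the remark following (\ref{15}) that $S_{\mu,\nu}^{(1)}(z)$ is subdominant at both $z=0$ and $z=\infty e^{\pi i/2}$, the analogous matching gives $(j,k)=(-1,0)$ for $S_{\mu,\nu}^{(1)}$; for $S_{\mu,\nu}^{(2)}$ it gives $(j,k)=(0,1)$ symmetrically via (\ref{23}).

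Next I would invoke the uniqueness of a doubly-subdominant particular solution of (\ref{61}). Two such solutions would differ by a homogeneous solution that is subdominant throughout $S_{j}\cup S_{k}$, that is, a linear combination of $z^{1/2}J_{\nu}(\nu z),z^{1/2}H_{\nu}^{(1)}(\nu z),z^{1/2}H_{\nu}^{(2)}(\nu z)$ free of exponential $\nu$-growth at both relevant singularities. Using (\ref{46})--(\ref{46b}), and the fact that at $z=0$ the function $J_{\nu}(\nu z)$ is exponentially recessive in $\nu$ while $Y_{\nu}(\nu z)$ is exponentially dominant, no nontrivial such combination exists for any of the three pairs $(-1,1),(-1,0),(0,1)$. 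Hence $W(z)\equiv w_{\mu}^{(j,k)}(\nu,z)$ throughout the respective region, and inserting (\ref{85}) yields
\[
\mathscr{S}_{\mu,\nu}(\nu z)\sim\frac{\nu^{\mu+1}}{z^{1/2}}\cdot\frac{1}{\nu^{2}}\sum_{s=0}^{\infty}\frac{G_{\mu,s}(z)}{\nu^{2s}}=\frac{\nu^{\mu}}{z^{1/2}}\sum_{s=0}^{\infty}\frac{G_{\mu,s}(z)}{\nu^{2s+1}},
\]
which is (\ref{87})--(\ref{89}) in the prescribed regions.

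Finally, (\ref{89aa}) requires no new work: by (\ref{34}), $S_{\mu,\nu}^{(0)}(\nu z)=\tfrac12[S_{\mu,\nu}^{(1)}(\nu z)+S_{\mu,\nu}^{(2)}(\nu z)]$, and on the intersection $S^{(-1,0)}(\delta)\cap S^{(0,1)}(\delta)$ both summands already share the same asymptotic series by (\ref{88})--(\ref{89}), so their average inherits it. The main obstacle is the uniqueness step: one must be careful that the large-$\nu$ sense of ``subdominant at a singularity of (\ref{61})'' aligns with the algebraic-versus-exponential growth of the underlying Bessel basis, in particular at $z=0$ for $S_{\mu,\nu}^{(j)}$ when $\Re(\mu)<-1$ where the Lommel function itself is mildly unbounded at the origin but still dominated in $\nu$-growth by $Y_{\nu}(\nu z)$, thereby ruling out any nontrivial homogeneous correction.
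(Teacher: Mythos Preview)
Your proposal is correct and follows essentially the same approach as the paper: identify $\nu^{-\mu-1}z^{1/2}\mathscr{S}_{\mu,\nu}(\nu z)$ with the unique doubly-subdominant particular solution $w_{\mu}^{(j,k)}(\nu,z)$ of (\ref{61}) and then apply (\ref{85}), with (\ref{89aa}) obtained from (\ref{34}) by averaging (\ref{88}) and (\ref{89}). You supply more detail on the uniqueness step than the paper does, but the argument is the same.
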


\begin{proof}
$\nu^{-\mu-1}z^{1/2}S_{\mu,\nu}(\nu z)$ is the unique solution of (\ref{61}) that is subdominant in $S_{-1} \cup S_{1}$. Thus it must be equal to $w_{\mu}^{(-1,1)}(\nu,z)$ which shares this property, and hence
\begin{equation}  \label{89a}
S_{\mu,\nu}(\nu z) = \nu^{\mu+1} z^{-1/2} w_{\mu}^{(-1,1)}(\nu,z).
\end{equation}
Therefore from (\ref{85}) we then get (\ref{87}). The expansions (\ref{88}) and (\ref{89}) follow similarly from the identifications
\begin{equation}  \label{89b}
S_{\mu,\nu}^{(1)}(\nu z) = \nu^{\mu+1} z^{-1/2} w_{\mu}^{(-1,0)}(\nu,z),
\end{equation}
\begin{equation}  \label{89c}
S_{\mu,\nu}^{(2)}(\nu z) = \nu^{\mu+1} z^{-1/2} w_{\mu}^{(0,1)}(\nu,z),
\end{equation}
and (\ref{89aa}) comes from (\ref{34}), (\ref{88}) and (\ref{89}).
\end{proof}

We now move on to obtaining expansions for these functions that are valid at the turning point $z=1$. We begin by establishing an expansion for a connection coefficient that is required. From \cite[Eq. (112)]{Dunster:2020:ASI}, with $u=\nu$, $m=\infty$, $w_{m,-1}(\nu,z)=z^{1/2}w_{-1}(\nu,z)$, and $\mu$ dependence indicated in our functions, we have the important connection formula
\begin{equation}  \label{90}
w_{\mu}^{(-1,0)}(\nu,z) = w_{\mu}^{(-1,1)}(\nu,z)-2\pi e^{\pi i/6}\gamma_{\mu}(\nu)z^{1/2}w_{-1}(\nu,z),
\end{equation}
where $w_{-1}(\nu,z)$ is given by (\ref{79}), $w_{\mu}^{(-1,0)}(\nu,z)$, $w_{\mu}^{(-1,1)}(\nu,z)$ are given by (\ref{85}), and the coefficient $\gamma_{\mu}(\nu)$ is the one in question that will be used in all our approximations valid at the turning point $z=1$. Although we do not obtain an explicit representation, we are able to obtain an asymptotic approximation for large $\nu$. Specifically, using (\ref{14}), (\ref{81}), (\ref{89a}) and (\ref{89b}) we obtain the required approximation
\begin{equation}  \label{91}
\gamma_{\mu}(\nu) \sim 
\frac{2^{\mu-\frac{1}{2}}
\Gamma\left(\tfrac{1}{2}\mu+\tfrac{1}{2}\nu+\tfrac{1}{2}\right)}
{\nu^{\mu+\frac{4}{3}}\Gamma\left(\tfrac{1}{2}\nu
-\tfrac{1}{2}\mu+\tfrac{1}{2}\right)}
\quad (\nu \rightarrow \infty).
\end{equation}

Although this is not exact, if we expand the RHS as an asymptotic expansion for large $\nu$ it agrees to all orders. So for example with the aid of Stirling's formula \cite[Eq. 5.11.3]{NIST:DLMF}
\begin{equation}  \label{92}
\gamma_{\mu}(\nu) =\frac{1}{\sqrt{2}\,\nu^{4/3}}
\left\{1+\frac {\mu\left( 1-\mu^{2}\right) }{6{\nu}^{2}}
+\mathcal{O}\left( \frac{1}{\nu^4} \right)
\right\}.
\end{equation}

With this constant having been determined asymptotically, a third coefficient function $\mathcal{G}_{\mu}(\nu,z)$, companion to (\ref{76}) and (\ref{77}) and analytic for the same values of $z$ as them, is then defined from \cite[Eqs. (117) and (120)]{Dunster:2020:ASI}. It is also slowly varying for large $\nu$, and specifically for $\nu \rightarrow \infty$, $\mu \in \mathbb{C}$ fixed, uniformly for $z \in S(\delta)$
\begin{equation}  \label{93}
\mathcal{G}_{\mu}(\nu,z) \sim \frac{1}{\nu^{2}}\sum\limits_{s=0}^{\infty} \frac{G_{\mu,s}(z)}{\nu^{2s}} -\frac{\gamma_{\mu}(\nu)z^{1/2}\phi(z)J(\nu,z)}{\nu^{2/3}\zeta},
\end{equation}
where $J(\nu,z)$ has the formal expansion under the same conditions
\begin{multline}  \label{94}
 J(\nu,z) \sim -\exp \left\{\sum\limits_{s=1}^{\infty} \frac{\tilde{\mathcal{E}}_{2s}(z)}{\nu^{2s}} \right\}\cosh \left\{\sum\limits_{s=0}^{\infty} 
\frac{\tilde{\mathcal{E}}_{2s+1}(z)}{\nu^{2s+1}}  
\right\}\sum\limits_{k=0}^{\infty} \frac{(3k)!}{k!\left( 3\nu^{2}\zeta^{3} \right)^{k}} \\ 
 +\frac{1}{\nu\zeta^{3/2}}\exp \left\{\sum\limits_{s=1}^{\infty} \frac{{\mathcal{E}}_{2s}(z)}{\nu^{2s}}  \right\}\sinh \left\{\sum\limits_{s=0}^{\infty} 
\frac{{\mathcal{E}}_{2s+1}(z)}{\nu^{2s+1}}  \right\}\sum\limits_{k=0}^{\infty}
\frac{(3k+1)!}{k!\left(3\nu^{2}\zeta^{3} \right)^{k}}.
\end{multline}

From \cite[Thm. 7]{Dunster:2020:ASI} the solutions (\ref{85}) have the uniform asymptotic expansions
\begin{multline}  \label{95}
w_{\mu}^{(j,k)}(\nu,z)=\gamma_{\mu}(\nu)\left\{\mathrm{Wi}^{(j,k)}\left(\nu^{2/3}\zeta  \right)\mathcal{A}(\nu,z) \right. \\ + \left. {\mathrm{Wi}^{(j,k)}}^{\prime}\left(\nu^{2/3}\zeta \right)\mathcal{B}(\nu,z) \right\} +\mathcal{G}_{\mu}(\nu,z)
 \quad (j,k=0,\pm1, \, j<k).
\end{multline}

We note that
 \begin{equation}  \label{95a}
 {\mathrm{Wi}^{(0,1)}}^{\prime}\left(\nu^{2/3}\zeta \right)
 =\pi e^{2\pi i/3}\mathrm{Hi}^{\prime}
 \left(\nu^{2/3}\zeta e^{-2\pi i/3}\right),
\end{equation}
and
\begin{equation}  \label{95b}
{\mathrm{Wi}^{(-1,0)}}^{\prime}\left(\nu^{2/3}\zeta \right)
=\pi e^{-2\pi i/3}\mathrm{Hi}^{\prime}
\left(\nu^{2/3}\zeta e^{2\pi i/3} \right).
\end{equation}

While the function $\mathcal{G}_{\mu}(\nu,z)$ is analytic at $z=1$, the terms in the series (\ref{93}) and (\ref{94}) for it are not, and so these cannot be directly used near this point. However, an asymptotic expansion for $\mathcal{G}_{\mu}(\nu,z)$ can be computed near this point in similar way to $\mathcal{A}(\nu,z)$ and $\mathcal{B}(\nu,z)$, as described above. That is, either by Cauchy's integral formula (see \cite[Eq. (121)]{Dunster:2020:ASI}), or by re-expanding the series in inverse powers of $\nu^{2}$ for a few terms. In the latter case the coefficients of such a re-expansion have a removable singularity at $z=1$ by virtue of \cref{thm:nopoles}. Thus as $\nu \rightarrow \infty$, with $\mu \in \mathbb{C}$ fixed, (\ref{95}) provides asymptotic expansions for particular solutions of (\ref{61}), that are uniformly for $z \in S(\delta)$, just like (\ref{79}) do for the homogeneous solutions.

Matching of these expansions with the Lommel functions follows immediately from (\ref{64}), (\ref{65}), (\ref{66}), (\ref{89a}), (\ref{89b}) and (\ref{89c}). In particular we have
\begin{theorem}
\begin{multline}  \label{96}
S_{\mu,\nu}(\nu z) = \nu^{\mu+1} z^{-1/2} 
\Big[\mathcal{G}_{\mu}(\nu,z) \\
\left. +\pi\gamma_{\mu}(\nu)\left\{\mathrm{Hi}\left(\nu^{2/3}\zeta  \right)\mathcal{A}(\nu,z)+
{\mathrm{Hi}}^{\prime}\left(\nu^{2/3}\zeta \right)
\mathcal{B}(\nu,z) \right\} \right],
\end{multline}
and
\begin{multline}  \label{98}
S_{\mu,\nu}^{(0)}(\nu z) = \nu^{\mu+1} z^{-1/2} 
\Big[\mathcal{G}_{\mu}(\nu,z) \\
\left. -\pi\gamma_{\mu}(\nu)\left\{\mathrm{Gi}\left(\nu^{2/3}\zeta  \right)\mathcal{A}(\nu,z)+
{\mathrm{Gi}}^{\prime}\left(\nu^{2/3}\zeta \right)
\mathcal{B}(\nu,z) \right\} \right],
\end{multline}
where $\mathrm{Gi}(z)$ is the Scorer function defined by \cite[Eq. 9.12.4]{NIST:DLMF}. Expansions for $\mathcal{A}(\nu,z)$, $\mathcal{B}(\nu,z)$ and $\mathcal{G}_{\mu}(\nu,z)$ for $\nu \rightarrow \infty$ with $\mu \in \mathbb{C}$ fixed, uniformly for $z \in S(\delta)$, are furnished by (\ref{76}), (\ref{77}) and (\ref{93}).
\end{theorem}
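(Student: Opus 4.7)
Both formulas are essentially direct substitutions into the identifications (\ref{89a})--(\ref{89c}) followed by the uniform expansion (\ref{95}). The plan is first to handle (\ref{96}) in a single chain of substitutions, then to use the definition (\ref{34}) to assemble (\ref{98}) from the $(j,k)=(-1,0)$ and $(j,k)=(0,1)$ versions of (\ref{95}), and finally to reduce the resulting sum of $\mathrm{Wi}$-functions to the Scorer function $\mathrm{Gi}$ by a standard Scorer identity.

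For (\ref{96}), I would insert (\ref{95}) with $(j,k)=(-1,1)$ into (\ref{89a}), using $\mathrm{Wi}^{(-1,1)}(w)=\pi\mathrm{Hi}(w)$ and ${\mathrm{Wi}^{(-1,1)}}'(w)=\pi\mathrm{Hi}'(w)$ from (\ref{64}). This yields (\ref{96}) verbatim, and the claimed uniform validity on $z \in S(\delta)$ together with the expansions of $\mathcal{A}(\nu,z)$, $\mathcal{B}(\nu,z)$, $\mathcal{G}_\mu(\nu,z)$ is inherited directly from (\ref{76}), (\ref{77}), (\ref{93}) and the discussion preceding \cref{thm:nopoles} concerning the removable singularity at $z=1$.

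For (\ref{98}), I would start from (\ref{34}), then apply (\ref{89b}), (\ref{89c}) to write
\begin{equation*}
S_{\mu,\nu}^{(0)}(\nu z)=\tfrac{1}{2}\nu^{\mu+1}z^{-1/2}\bigl\{w_{\mu}^{(-1,0)}(\nu,z)+w_{\mu}^{(0,1)}(\nu,z)\bigr\}.
\end{equation*}
Next, invoke (\ref{95}) for each of $(j,k)=(-1,0)$ and $(0,1)$. The two copies of $\mathcal{G}_{\mu}(\nu,z)$ combine with the factor $\tfrac{1}{2}$ into a single $\mathcal{G}_{\mu}(\nu,z)$, while the remaining terms contain $\tfrac{1}{2}\bigl\{\mathrm{Wi}^{(-1,0)}+\mathrm{Wi}^{(0,1)}\bigr\}(\nu^{2/3}\zeta)$ and its derivative, multiplied by $\gamma_{\mu}(\nu)\mathcal{A}(\nu,z)$ and $\gamma_{\mu}(\nu)\mathcal{B}(\nu,z)$ respectively.

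The one nontrivial step---and the expected main obstacle---is the Scorer identity
\begin{equation*}
\mathrm{Wi}^{(-1,0)}(w)+\mathrm{Wi}^{(0,1)}(w)=-2\pi\,\mathrm{Gi}(w),
\end{equation*}
which by (\ref{65}), (\ref{66}) is equivalent to $e^{2\pi i/3}\mathrm{Hi}(we^{2\pi i/3})+e^{-2\pi i/3}\mathrm{Hi}(we^{-2\pi i/3})=-2\,\mathrm{Gi}(w)$. To prove it I would observe that both sides satisfy the same inhomogeneous Airy equation $u''-wu=2/\pi$ (using $\mathrm{Hi}''-w\mathrm{Hi}=1/\pi$ with $\alpha^{3}=1$ for $\alpha=e^{\pm 2\pi i/3}$, and $\mathrm{Gi}''-w\mathrm{Gi}=-1/\pi$), so their difference is a homogeneous Airy solution $\alpha\mathrm{Ai}(w)+\beta\mathrm{Bi}(w)$. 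Matching the common algebraic decay $\sim -2/(\pi w)$ as $w\to+\infty$ (from $\mathrm{Gi}(w)\sim 1/(\pi w)$ and the known $\mathrm{Hi}$-asymptotics in the sectors $\arg(we^{\pm 2\pi i/3})=\pm 2\pi/3$, which lie in the decay sector $|\arg(-\cdot)|\leq 2\pi/3-\delta$) rules out the $\mathrm{Bi}$-term, and comparing at $w=0$ (using $\mathrm{Hi}(0)=2\,\mathrm{Gi}(0)$) forces $\alpha=0$. Differentiating the identity once yields the companion identity needed for the $\mathcal{B}$-term. Substituting these into the previous display produces (\ref{98}) with the asserted $-\pi\gamma_{\mu}(\nu)$ coefficient, and the uniform validity on $S(\delta)$ again comes for free from (\ref{95}).
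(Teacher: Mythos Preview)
Your proposal is correct and follows essentially the same route as the paper: for (\ref{96}) the paper combines (\ref{64}), (\ref{89a}) and (\ref{95}), and for (\ref{98}) it combines (\ref{34}), (\ref{89b}), (\ref{89c}), (\ref{95}) with the Scorer identity $\mathrm{Gi}(z)=-\tfrac{1}{2\pi}\{\mathrm{Wi}^{(0,1)}(z)+\mathrm{Wi}^{(-1,0)}(z)\}$. The only difference is that the paper simply cites this identity from \cite[Eq.~9.12.12]{NIST:DLMF} rather than deriving it via the ODE-plus-matching argument you sketch.
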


\begin{proof}
From (\ref{64}), (\ref{87}) and (\ref{95}) we arrive at (\ref{96}). Next from (\ref{65}), (\ref{66}) and \cite[Eq. 9.12.12]{NIST:DLMF}
\begin{equation}  \label{97}
\mathrm{Gi}(z)=-\frac{1}{2 \pi} \left\{
\mathrm{Wi}^{(0,1)}(z)+\mathrm{Wi}^{(-1,0)}(z)
\right\}.
\end{equation}
Thus from this along with (\ref{34}), (\ref{65}), (\ref{66}), (\ref{89b}), (\ref{89c}) and (\ref{95}) we get (\ref{98})
\end{proof}

The corresponding expansions for $S_{\mu,\nu}^{(j)}(\nu z)$ ($j=1,2$) are the same as (\ref{96}) except with $\mathrm{Hi}(\nu^{2/3}\zeta)$ replaced respectively by $$e^{\pm 2\pi i/3}\mathrm{Hi}(\nu^{2/3}\zeta e^{\pm 2\pi i/3}),$$ and likewise for the derivatives (see (\ref{95a}) and (\ref{95b})). An expansion for $s_{\mu,\nu}(\nu z)$ follows from (\ref{35}), (\ref{79}), (\ref{80}) and (\ref{98}).

\begin{remark} \label{remark1}
Near $z=0$ one should use the simpler expansion (\ref{89aa}) instead of (\ref{98}), since in the latter there is a cancellation of terms that are large relative to the function itself, rendering it not very numerically stable for very small $|z|$. The same is true that (\ref{88}) and (\ref{89}) should be used in place of their Scorer function expansions when $|z|$ is very small.
\end{remark}

\section{Anger-Weber functions and Neumann polynomials} \label{sec4}

Here we use the results of the previous section to obtain uniform asymptotic expansions for the Anger-Weber functions, and also the Neumann polynomials. Before doing so we record some relevant relations for the Anger-Weber functions. 

Firstly the Anger function is defined by
\begin{equation}  \label{41}
\mathbf{J}_{\nu}(z)=\frac{1}{\pi}\int_{0}^{\pi}\cos(\nu \theta-z\sin(\theta))d\theta,
\end{equation}
and is a solution of \cite[Sect. 11.10(ii)]{NIST:DLMF}
\begin{equation}  \label{42}
\frac{d^{2}y}{dz^{2}}+\frac{1}{z}\frac{dy}{dz}
+\left(1-\frac{\nu^{2}}{z^{2}} \right)y
=\frac{\mathbf{j}_{0}(\nu)}{z}+\frac{\mathbf{j}_{-1}(\nu)}{z^{2}},
\end{equation}
where
\begin{equation}  \label{43}
\mathbf{j}_{0}(\nu)=\frac{\sin(\pi \nu)}{\pi}, \quad
\mathbf{j}_{-1}(\nu)=-\frac{\nu \sin(\pi \nu)}{\pi}.
\end{equation}
In terms of the Lommel function $s_{\mu,\nu}(z)$ it can be expressed as \cite[Eq. 11.10.17]{NIST:DLMF}
\begin{equation}  \label{44}
\mathbf{J}_{\nu}\left(z\right)=
\mathbf{j}_{0}(\nu)s_{0,\nu}(z) + \mathbf{j}_{-1}(\nu) s_{-1,\nu}(z).
\end{equation}

Next from (\ref{06}), (\ref{07}), (\ref{15}), (\ref{23}), (\ref{34}), (\ref{43}) and (\ref{44}) we have the more useful four identities
\begin{equation}  \label{48}
\mathbf{J}_{\nu}(z) = 
\mathbf{j}_{0}(\nu)S_{0,\nu}^{(j)}(z)+\mathbf{j}_{-1}(\nu)S_{-1,\nu}^{(j)}(z)
+J_{\nu}(z) \quad (j=0,1,2,\emptyset),
\end{equation}
where $j=\emptyset$ denotes $S_{\mu,\nu}(z)$.
As $z \rightarrow 0$ we have for $\nu \neq 0$
\begin{equation}  \label{45}
\mathbf{J}_{\nu}(z)=\frac{\sin(\pi \nu)}{\pi \nu}+\mathcal{O}(z),
\end{equation}
and if $\nu \in \mathbb{N}$ it reduces to being equal to $J_{\nu}(z)$, in which case of course (\ref{46}) applies instead.

Next the Weber function is defined by
\begin{equation}  \label{50}
\mathbf{E}_{\nu}(z)=\frac{1}{\pi}\int_{0}^{\pi}
\sin(\nu \theta-z\sin(\theta))d\theta,
\end{equation}
and this satisfies (\ref{42}) with $\mathbf{j}_{l}(\nu)$ replaced by $\mathbf{e}_{l}(\nu)$ ($l=0,-1$) where
\begin{equation}  \label{51}
\mathbf{e}_{0}(\nu)=-\frac{1}{\pi} \left\{1+\cos(\pi\nu)\right\}, 
\quad
\mathbf{e}_{-1}(\nu)=-\frac{\nu}{\pi}\left\{1-\cos(\pi\nu)\right\}.
\end{equation}
From \cite[Eq. 11.10.18]{NIST:DLMF}
\begin{equation}  \label{52}
\mathbf{E}_{\nu}(z)=\mathbf{e}_{0}(\nu)s_{0,\nu}(z)
+\mathbf{e}_{-1}(\nu) s_{-1,\nu}(z).
\end{equation}
For $j=1,2$ we have from (\ref{15}), (\ref{23}), (\ref{34}) and (\ref{52}) the more useful relations
\begin{equation}  \label{53}
\mathbf{E}_{\nu}(z) = \mathbf{e}_{0}(\nu)S_{0,\nu}^{(j)}(z)
+\mathbf{e}_{-1}(\nu)S_{-1,\nu}^{(j)}(z)+(-1)^{j}iJ_{\nu}(z),
\end{equation}
and hence from (\ref{34}) we also have
\begin{equation}  \label{54}
\mathbf{E}_{\nu}(z) = \mathbf{e}_{0}(\nu)S_{0,\nu}^{(0)}(z)
+\mathbf{e}_{-1}(\nu)S_{-1,\nu}^{(0)}(z).
\end{equation}
Also from (\ref{06}) and (\ref{52})
\begin{equation}  \label{55}
\mathbf{E}_{\nu}(z) = \mathbf{e}_{0}(\nu) S_{0,\nu}(z)
+\mathbf{e}_{-1}(\nu) S_{-1,{\nu}}(z) - Y_{\nu}(z).
\end{equation}
and as $z \rightarrow 0$ we have for $\nu \neq 0$
\begin{equation}  \label{55a}
\mathbf{E}_{\nu}(z)=\frac{1-\cos(\pi \nu)}{\pi \nu}+\mathcal{O}(z).
\end{equation}

The associated Anger-Weber function is defined by
\begin{equation}  \label{56}
\mathbf{A}_{\nu}(z)=\frac{1}{\pi}\int_{0}^{\infty}\exp(-\nu t-z\sinh(t))dt,
\end{equation}
and satisfies (\ref{42}) with $\mathbf{j}_{0}(\nu)$ and $\mathbf{j}_{-1}(\nu)$ replaced by $1/\pi$ and $-\nu/\pi$, respectively. Now from (\cite[Eq. 11.11.4]{NIST:DLMF})
\begin{equation}  \label{56a}
\mathbf{A}_{\nu}(z) \sim 
\frac{1}{\pi z}
\quad (z\rightarrow \infty, \,|\arg(z)|\leq\pi-\delta),
\end{equation}
and hence on referring to (\ref{07a}) and (\ref{08}) we assert that
\begin{equation}  \label{59}
\mathbf{A}_{\pm \nu}(z) = \frac{1}{\pi}\left \{S_{0,\nu}(z)
\mp \nu S_{-1,\nu}(z) \right\},
\end{equation}
since both sides of this equation are the unique solutions of the same ODE that are subdominant as $z \rightarrow \infty \exp(\pm \pi i/2)$. From (\ref{13}), (\ref{22}), (\ref{34}) and (\ref{59}) it then follows that for $j=0,1,2$
\begin{equation}  \label{60}
\mathbf{A}_{\nu}(z)
= \frac{1}{\pi}\left \{S_{0,\nu}^{(j)}(z)
-\nu S_{-1,\nu}^{(j)}(z) \right\}.
\end{equation}

Extensions of our asymptotic expansions to values of $z$ outside the given regions, including across the cut $(-\infty,0]$ follow from the following analytic continuation and connection formulas. 

Firstly, from (\ref{07}), (\ref{29}), (\ref{59}) and (\ref{60}) and for $m \in \mathbb{Z}$
\begin{equation}   \label{60a}
\mathbf{A}_{\nu}\left(ze^{m\pi i}\right)
=\mathbf{A}_{\nu}(z)
+\frac{1-e^{m \pi \nu i}}{\sin(\nu \pi)}J_{\nu}(z),
\end{equation}
when $m$ is even, and
\begin{multline}   \label{60b}
\mathbf{A}_{\nu}\left(ze^{m\pi i}\right)
= -\frac{1}{\pi}\left \{S_{0,\nu}^{(1)}(z)
+\nu S_{-1,\nu}^{(1)}(z) \right\}
+\frac{e^{-\pi \nu i}-e^{m \pi \nu i}}{\sin(\nu \pi)}J_{\nu}(z)  \\
= -\frac{1}{\pi}\left \{S_{0,\nu}^{(2)}(z)
+\nu S_{-1,\nu}^{(2)}(z) \right\}
+\frac{e^{\pi \nu i}-e^{m \pi \nu i}}{\sin(\nu \pi)}J_{\nu}(z),
\end{multline}
when $m$ is odd. When $\nu$ is an integer the limiting values are taken for the coefficients of $J_{\nu}(z)$, using L'H\^{o}pital's rule. Note in both cases the analytic continuation of $\mathbf{A}_{\nu}(z)$ remains bounded at $z=0$. When $m$ is even it is unbounded at infinity except when the coefficient of $J_{\nu}(z)$ in (\ref{60a}) is zero. When $m$ is odd is unbounded at infinity except in the upper half plane when the coefficient of the first $J_{\nu}(z)$ term in (\ref{60b}) is zero, and in the lower half plane when the coefficient of the second $J_{\nu}(z)$ term in this equation is zero.

Important inter-relations between the three functions read as follows, and we shall use these too. Firstly from \cite[Eq. 11.10.15]{NIST:DLMF}
\begin{equation}  \label{57}
\mathbf{J}_{\nu}(z)=
\sin(\pi\nu) \mathbf{A}_{\nu}(z)+J_{\nu}(z).
\end{equation}

Next from (\ref{51}), (\ref{54}) and (\ref{60})
\begin{equation}  \label{58}
\mathbf{E}_{\nu}(z) = 
-\cos(\pi\nu) \mathbf{A}_{\nu}(z)
-\frac{1}{\pi}\left \{S_{0,\nu}^{(0)}(z)
+\nu S_{-1,\nu}^{(0)}(z) \right\}.
\end{equation}

Finally using \cite[Eqs. 11.10.12 - 11.10.14]{NIST:DLMF}
\begin{equation}  \label{117}
\mathbf{J}_{-\nu}(z)=\mathbf{J}_{\nu}(-z)
=\sin(\pi\nu)\mathbf{E}_{\nu}(z)
+\cos(\pi\nu)\mathbf{J}_{\nu}(z),
\end{equation}
and
\begin{equation}  \label{118}
\mathbf{E}_{-\nu}(z)=-\mathbf{E}_{\nu}(-z)
=\cos(\pi\nu)\mathbf{E}_{\nu}(z)
-\sin(\pi\nu)\mathbf{J}_{\nu}(z).
\end{equation}

\subsection{Asymptotic expansions} We begin by defining for $s=0,1,2,\cdots$ two sequences of functions by
\begin{equation}  \label{99}
G_{s}^{\pm}(z)=z^{-1/2}\left\{G_{0,s}(z) \pm G_{-1,s}(z)\right\},
\end{equation}
where $G_{\mu,s}(z)$ are given by (\ref{83}) and (\ref{84}). Consequently
\begin{equation}  \label{100}
G_{0}^{+}(z)=\frac{G_{0,0}(z)+G_{-1,0}(z)}{z^{1/2}}
=\frac{1}{z-1},
\end{equation}
and 
\begin{equation}  \label{101}
G_{0}^{-}(z)=\frac{G_{0,0}(z)-G_{-1,0}(z)}{z^{1/2}}
=\frac{1}{z+1}.
\end{equation}
Using (\ref{84}) and (\ref{99}) we find that the rest satisfy the recursion relation
\begin{equation}  \label{102}
G_{s+1}^{\pm}(z)
=\frac{z {G_{s}^{\pm}}'(z)+z^{2}{G_{s}^{\pm}}''(z)}
{1-z^{2}}
\quad (s=0,1,2,\cdots).
\end{equation}
Thus by induction $G_{s}^{+}(z)=-G_{s}^{-}(-z)$. For the next three $G_{s}^{-}(z)$ coefficients we get from (\ref{100}) and (\ref{102})
\begin{equation}  \label{103}
G_{1}^{-}(z)
=-\frac{z}{(z+1)^{4}},
\end{equation}
\begin{equation}  \label{104}
G_{2}^{-}(z)
=\frac{z(9z-1)}{(z+1)^{7}},
\end{equation}
and
\begin{equation}  \label{105}
G_{3}^{-}(z)
=-\frac{z(225z^{2} - 54z + 1)}{(z+1)^{10}}.
\end{equation}
These suggest that $G_{s}^{-}(z)$ only have poles at $z=-1$, but this is not so obvious from the recursion formula (\ref{102}). However this is verified as part of the following.
\begin{theorem}
The coefficients $G_{s}^{-}(z)$ are rational functions of $z$ whose only poles are at $z=-1$, and as $\nu \rightarrow \infty$ uniformly for $z \in S(\delta)$
\begin{equation}  \label{106}
\mathbf{A}_{\nu}(\nu z)
\sim  \frac{1}{\pi}
\sum\limits_{s=0}^{\infty} \frac{G_{s}^{-}(z)}{\nu^{2s+1}}.
\end{equation}
\end{theorem}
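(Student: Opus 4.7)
The plan is to derive the expansion from the combination identity (\ref{60}) together with the subdominant expansions in \cref{thm:SG}. Taking (\ref{60}) with $j=1$ gives
\[
\mathbf{A}_{\nu}(z) = \pi^{-1}\left\{S_{0,\nu}^{(1)}(z) - \nu S_{-1,\nu}^{(1)}(z)\right\},
\]
and substituting $z \mapsto \nu z$ and applying (\ref{88}) with $\mu = 0$ and $\mu = -1$ yields
\[
\mathbf{A}_{\nu}(\nu z) \sim \frac{1}{\pi z^{1/2}} \sum_{s=0}^{\infty} \frac{G_{0,s}(z) - G_{-1,s}(z)}{\nu^{2s+1}} = \frac{1}{\pi} \sum_{s=0}^{\infty} \frac{G_{s}^{-}(z)}{\nu^{2s+1}},
\]
uniformly for $z \in S^{(-1,0)}(\delta)$. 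The parallel computation using (\ref{60}) with $j=2$ and (\ref{89}) gives the same expansion uniformly for $z \in S^{(0,1)}(\delta)$. Since the right-hand sides coincide, the expansion holds uniformly on the union $S^{(-1,0)}(\delta) \cup S^{(0,1)}(\delta)$, which is $S(\delta)$ with a neighbourhood of the turning point $z=1$ removed.

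The extension across $z=1$ is the main step, and is where I invoke \cref{thm:nopoles}. From (\ref{59}) together with the fact that the Lommel functions $S_{0,\nu}(z)$ and $S_{-1,\nu}(z)$ are analytic in the cut plane $|\arg(z)|<\pi$, the function $H(\nu,z) := \mathbf{A}_{\nu}(\nu z)$ is analytic in $z$ throughout the open disk $D = \{z : |z-1| < \rho_{2}\}$ for any $\rho_{2} < 1$. The expansion just derived holds in an annulus $\rho_{1} < |z-1| < \rho_{2}$, where $\rho_{1}$ is a suitable constant multiple of $\delta$ chosen so that the annulus lies entirely in $S^{(-1,0)}(\delta) \cup S^{(0,1)}(\delta)$. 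Hence \cref{thm:nopoles} applies with $z_{0} = 1$ and $h_{s}(z) = G_{s}^{-}(z)/\pi$. Its conclusion is twofold: each $G_{s}^{-}(z)$ has at worst a removable singularity at $z=1$, and the expansion extends to all of $D$. Patching this with the expansion on $S^{(-1,0)}(\delta) \cup S^{(0,1)}(\delta)$ yields uniform validity on all of $S(\delta)$.

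The rationality of the $G_{s}^{-}(z)$ and the location of their poles follows by induction from (\ref{100}) and the recursion (\ref{102}). The base case $G_{0}^{-}(z) = 1/(z+1)$ is rational with a single pole at $z=-1$. Inductively, if $G_{s}^{-}(z)$ is rational then so is $(zG_{s}^{-\prime}(z) + z^{2}G_{s}^{-\prime\prime}(z))/(1-z^{2})$, and its only candidate poles are at $z=\pm 1$. The analyticity at $z=1$ established in the previous paragraph eliminates $z=1$, leaving $z=-1$ as the only possible pole. I do not anticipate any substantive obstacle beyond recognising that \cref{thm:nopoles} is precisely the mechanism that simultaneously extends the expansion across the turning point and removes the apparent pole of $G_{s}^{-}(z)$ there.
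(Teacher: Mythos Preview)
Your approach is essentially the same as the paper's, but there is a genuine gap. The union $S^{(-1,0)}(\delta) \cup S^{(0,1)}(\delta)$ is \emph{not} $S(\delta)$ with just a neighbourhood of $z=1$ removed. The positive real axis $(1,\infty)$ is the boundary level curve $\Re(\xi)=0$ separating $S_{-1}$ and $S_{1}$, so every point $z>1$ lies at distance zero from both $S_{-1}$ and $S_{1}$ and hence belongs to neither $S^{(-1,0)}(\delta)$ nor $S^{(0,1)}(\delta)$. Your two-region union therefore misses an entire $\delta$-strip along the real ray $(1,\infty)$, not just a disk about the turning point.

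This omission actually blocks your invocation of \cref{thm:nopoles}: any annulus $\rho_{1}<|z-1|<\rho_{2}$ about $z_{0}=1$ meets the real interval $(1+\rho_{1},1+\rho_{2})$, and on that interval you have not established the expansion. So the hypothesis of \cref{thm:nopoles} is not met. The paper closes this gap by also using the representation (\ref{59}) in terms of $S_{\mu,\nu}(z)$ together with (\ref{87}), which gives the same expansion uniformly on the third region $S^{(-1,1)}(\delta)$; this region contains the real axis beyond $z=1+\delta'$. With all three regions in hand, their union is $S(\delta')$ minus only a disk about $z=1$, and then \cref{thm:nopoles} applies exactly as you describe. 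Adding this one ingredient makes your argument complete and identical to the paper's.
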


\begin{proof}
Firstly it is clear from their recursion relation that  $G_{s}^{-}(z)$ are rational functions whose only possible poles are at $z=\pm 1$. Next from (\ref{89a}) - (\ref{89c}), (\ref{59}) and (\ref{60}) we have for $(j,k)=(-1,0)$, $(0,1)$ and $(-1,1)$
\begin{equation}  \label{107}
\mathbf{A}_{\nu}(\nu z)
\sim \frac{\nu}{\pi z^{1/2}} \left\{
w_{0}^{(j,k)}(\nu,z) - w_{-1}^{(j,k)}(\nu,z)
\right\} \quad (\nu \rightarrow \infty, \, z \in S^{(j,k)}(\delta)).
\end{equation}
Hence from (\ref{85})
\begin{equation}  \label{108}
\mathbf{A}_{\nu}(\nu z)
\sim \frac{1}{\pi z^{1/2} }
\sum\limits_{s=0}^{\infty} \frac{G_{0,s}(z) - G_{-1,s}(z)}{\nu^{2s+1}}.
\end{equation}
which from (\ref{99}) yields (\ref{106}) for $z$ lying in the union of $S^{(j,k)}(\delta)$ ($(j,k)=(-1,0)$, $(0,1)$ and $(-1,1)$). On examining \cref{fig:fig2,fig:fig3,fig:fig4} we see that this union contains $S(\delta')$ but excludes part of a closed disk of radius $\delta'$ centred at $z=1$, where $\delta'$ is slightly larger than $\delta$ but is $\mathcal{O}(\delta)$ as $\delta \rightarrow 0$. Extension of (\ref{106}) into the (temporarily) excluded disk follows from \cref{thm:nopoles}, which also establishes analyticity of the rational functions $G_{s}^{-}(z)$ at $z=1$. Thus we have established that (\ref{106}) is true for $z \in S(\delta')$. Finally since $\delta$ is an arbitrary small positive constant we can replace $S(\delta')$ by $S(\delta)$, and the result follows.
\end{proof}

We remark that Watson \cite[Sect. 10.15]{Watson:1944:TTB} obtained an expansion similar to (\ref{106}) with coefficients that are the same, via integral methods. In \cite{Meijer:1932:UDA} explicit formulas were given for ($s$)th coefficient in terms of the ($2s$)th derivative of a certain function. In \cite{Nemes:2014:RP1} and \cite{Nemes:2014:RP2} the coefficients were expressed as a double sum of generalised Bernoulli polynomials, a recursion formula equivalent to (\ref{102}) was derived, and bounds for the error terms were given. In all of these the expansions (in terms of our notation) $z$ is positive and $\nu$ is complex. Thus they are restricted to the phase of the order and argument being the same. In contrast we consider real $\nu$, and the present results have been demonstrated to be uniformly valid for complex $z$ lying in a domain containing the turning point $z=1$, as well as $z=0$ and $z=\infty$ ($|\arg(z)|\leq \pi -\delta$), in accord with the powerful results for Bessel functions that currently exist.

The same is true for the following new expansions for negative order $-\nu$, which is a more complicated situation. Using (\ref{92}), (\ref{93}), (\ref{96}), (\ref{59}) and (\ref{99}) we have:
\begin{theorem}
As $\nu \rightarrow \infty$ uniformly for $z \in S(\delta)$
\begin{equation}  \label{111}
\mathbf{A}_{-\nu}(\nu z) \sim
\frac{\sqrt{2}}{\nu^{1/3}z^{1/2}}\left\{\mathrm{Hi}
\left(\nu^{2/3}\zeta  \right)\mathcal{A}(\nu,z)+
{\mathrm{Hi}}^{\prime}\left(\nu^{2/3}\zeta \right)
\mathcal{B}(\nu,z) \right\}
+\mathcal{J}(\nu,z),
\end{equation}
where
\begin{equation}  \label{112}
\mathcal{J}(\nu,z)=
\frac{1}{\pi }
\sum\limits_{s=0}^{\infty}
\frac{G_{s}^{+}(z)}{\nu^{2s+1}}
-\frac{\sqrt{2}\phi(z)J(\nu,z)}{\pi \nu \zeta},
\end{equation}
$\zeta$ is given by (\ref{62}), and expansions for $\mathcal{A}(\nu,z)$, $\mathcal{B}(\nu,z)$ and $J(\nu,z)$ are furnished by (\ref{76}), (\ref{77}) and (\ref{94}).
\end{theorem}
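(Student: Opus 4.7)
The plan is to reduce the claim to the already-established Scorer-function expansion (\ref{96}) for $S_{\mu,\nu}(\nu z)$ at the two values $\mu = 0$ and $\mu = -1$, combined via the identity (\ref{59}). First I would take the $-\nu$ case of (\ref{59}) to write
\begin{equation*}
\mathbf{A}_{-\nu}(\nu z) = \frac{1}{\pi}\left\{S_{0,\nu}(\nu z) + \nu S_{-1,\nu}(\nu z)\right\},
\end{equation*}
and then substitute (\ref{96}) twice to obtain
\begin{equation*}
\mathbf{A}_{-\nu}(\nu z) = \frac{\nu[\mathcal{G}_0(\nu,z) + \mathcal{G}_{-1}(\nu,z)]}{\pi z^{1/2}} + \frac{\nu[\gamma_0(\nu) + \gamma_{-1}(\nu)]}{z^{1/2}}\left\{\mathrm{Hi}(\nu^{2/3}\zeta)\mathcal{A}(\nu,z) + \mathrm{Hi}'(\nu^{2/3}\zeta)\mathcal{B}(\nu,z)\right\}.
\end{equation*}

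Next I would evaluate the combined connection coefficient via (\ref{91}). For $\mu = 0$ the Gamma-function ratio there is identically $1$, while for $\mu = -1$ the recurrence $\Gamma(z+1) = z\Gamma(z)$ reduces it to $2/\nu$. Both values accordingly give $\gamma_\mu(\nu) \sim 1/(\sqrt{2}\,\nu^{4/3})$ to all asymptotic orders, so $\nu[\gamma_0(\nu) + \gamma_{-1}(\nu)] \sim \sqrt{2}/\nu^{1/3}$, which matches the Hi-term prefactor in the statement.

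For the $\mathcal{G}$-piece I would apply (\ref{93}) at $\mu = 0$ and $\mu = -1$ and sum. By the definition (\ref{99}), $G_{0,s}(z) + G_{-1,s}(z) = z^{1/2} G_s^{+}(z)$, and inserting the leading-order value of $\gamma_0 + \gamma_{-1}$ found above into the second term of (\ref{93}) yields
\begin{equation*}
\mathcal{G}_0(\nu,z) + \mathcal{G}_{-1}(\nu,z) \sim z^{1/2}\sum_{s=0}^{\infty}\frac{G_s^{+}(z)}{\nu^{2s+2}} - \frac{\sqrt{2}\,z^{1/2}\phi(z)J(\nu,z)}{\nu^2 \zeta}.
\end{equation*}
Multiplying by $\nu/(\pi z^{1/2})$ then reproduces exactly $\mathcal{J}(\nu,z)$ as defined in (\ref{112}). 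Uniform validity for $z \in S(\delta)$ is inherited directly from the corresponding uniformity of (\ref{93}) and (\ref{96}).

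The main obstacle is verifying that the Gamma-function ratio in (\ref{91}) collapses \emph{to all asymptotic orders} at both $\mu = 0$ and $\mu = -1$, so that no further correction terms contaminate the clean leading factor $\sqrt{2}/\nu^{1/3}$ of the Scorer-function term. It is a convenient feature that these two values of $\mu$ are precisely the ones at which the polynomial $\mu(1-\mu^2)$ appearing in (\ref{92}) vanishes; this is what permits the crisp form of the stated expansion. One must also carefully track the two places where $\gamma_0 + \gamma_{-1}$ enters — once multiplying the Scorer terms and once hidden inside $\mathcal{G}_\mu$ via (\ref{93}) — since the same asymptotic simplification is invoked in both, and any mismatch would prevent the combined expression from collapsing to $\mathcal{J}(\nu,z)$.
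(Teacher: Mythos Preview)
Your proposal is correct and follows essentially the same route as the paper, which simply cites (\ref{92}), (\ref{93}), (\ref{96}), (\ref{59}) and (\ref{99}) without elaboration. Your use of (\ref{91}) rather than (\ref{92}) to see that the Gamma ratio collapses \emph{exactly} (not merely to leading order) at $\mu=0$ and $\mu=-1$ is in fact the cleaner justification, and makes explicit why no residual corrections contaminate the $\sqrt{2}/\nu^{1/3}$ prefactor.
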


We remark that, unlike $G_{s}^{-}(z)$, $G_{s}^{+}(z)$ have poles at $z=1$, and these are cancelled by corresponding singularities of $J(\nu,z)$ when $\mathcal{J}(\nu,z)$ is expanded as an asymptotic series in inverse powers of $\nu$ (again by applying \cref{thm:nopoles}). Thus for $z$ close to or equal to $1$, either a few such terms in a re-expansion can be used, or for more than a few terms in (\ref{112}), we use
\begin{equation}  \label{113}
\mathcal{J}(\nu,z) \sim
\frac{1}{2\pi i}\oint_{C}
\left\{\sum\limits_{s=0}^{\infty}
\frac{G_{s}^{+}(t)}{\nu^{2s+1}}
-\frac{\sqrt{2}\phi(t)J(\nu,t)}{\pi \nu \zeta(t)} \right\}
\frac{dt}{t-z},
\end{equation}
where $C$ is a positively orientated circle centred at $t=1$ of positive radius $r$ such that $|z-1|<r<1$; this also applies to (\ref{115}), (\ref{119}) and (\ref{120}) below. As mentioned in \cref{sec3} this is also how $\mathcal{A}(\nu,z)$ and $\mathcal{B}(\nu,z)$ are computed in the same circumstances.

We also note that unlike $\mathbf{A}_{\nu}(\nu z)$, $\mathbf{A}_{-\nu}(\nu z)$ is unbounded at $z=0$, and this is reflected in (\ref{111}) by the fact that $\mathrm{Hi}(\nu^{2/3}\zeta)$ is unbounded as $z \rightarrow 0+$ ($\zeta \rightarrow +\infty$). In fact we see from (\ref{111}) that $\mathbf{A}_{-\nu}(\nu z)$ is unbounded in the interior of $S_{0}$ as $\nu \rightarrow \infty$.

Next for the Anger and Weber functions we obtain the following uniform asymptotic expansions.
\begin{theorem}
As $\nu \rightarrow \infty$ uniformly for $z \in S(\delta)$
\begin{equation}  \label{114}
\mathbf{J}_{\nu}(\nu z) \sim
\frac{\sin(\pi\nu)}{\pi}
\sum\limits_{s=0}^{\infty} \frac{G_{s}^{-}(z)}{\nu^{2s+1}}
+\frac{\sqrt{2} \, w_{0}(\nu,z)}{\nu^{1/3}},
\end{equation}
and
\begin{multline}  \label{115}
\mathbf{E}_{\nu}(\nu z) \sim
\frac{1}{\nu^{1/3}}\left\{\mathrm{Gi}\left(\nu^{2/3}\zeta  \right)\mathcal{A}(\nu,z)+
{\mathrm{Gi}}^{\prime}\left(\nu^{2/3}\zeta \right)
\mathcal{B}(\nu,z) \right\} 
\\
-\frac{\cos(\pi\nu)}{\pi }
\sum\limits_{s=0}^{\infty} 
\frac{G_{s}^{-}(z)}{\nu^{2s+1}}
- \mathcal{J}(\nu,z),
\end{multline}
where $w_{0}(\nu,z)$ is given by (\ref{79}).
\end{theorem}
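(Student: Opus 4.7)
My plan is to derive both expansions by substituting previously established asymptotic results into the connection formulas (\ref{57}) and (\ref{58}) proved at the beginning of this section.

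For (\ref{114}) the argument is almost immediate. Rescaling (\ref{57}) gives $\mathbf{J}_\nu(\nu z) = \sin(\pi\nu)\,\mathbf{A}_\nu(\nu z) + J_\nu(\nu z)$. Inserting the Anger-Weber expansion (\ref{106}) into the first summand and the Airy-type Bessel expansion (\ref{80}) into the second yields (\ref{114}) directly, with no further computation.

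For (\ref{115}), the rescaled identity (\ref{58}) reads
$$\mathbf{E}_\nu(\nu z) = -\cos(\pi\nu)\,\mathbf{A}_\nu(\nu z) - \frac{1}{\pi}\bigl\{S_{0,\nu}^{(0)}(\nu z) + \nu S_{-1,\nu}^{(0)}(\nu z)\bigr\}.$$
The $\mathbf{A}_\nu(\nu z)$ term produces, via (\ref{106}), the $G_s^{-}$ sum appearing in (\ref{115}). For the bracketed combination I would apply (\ref{98}) at $\mu=0$ and $\mu=-1$ and collect terms, giving
$$S_{0,\nu}^{(0)}(\nu z) + \nu S_{-1,\nu}^{(0)}(\nu z) = \nu z^{-1/2}\bigl[\mathcal{G}_0(\nu,z)+\mathcal{G}_{-1}(\nu,z)\bigr] - \pi\nu z^{-1/2}\bigl[\gamma_0(\nu)+\gamma_{-1}(\nu)\bigr]\bigl\{\mathrm{Gi}(\nu^{2/3}\zeta)\mathcal{A}(\nu,z) + \mathrm{Gi}'(\nu^{2/3}\zeta)\mathcal{B}(\nu,z)\bigr\}.$$
Two identifications then complete the proof. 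First, evaluating (\ref{91}) at $\mu=0$ and $\mu=-1$ and using $\Gamma(\nu/2+1)=(\nu/2)\Gamma(\nu/2)$, the sum $\gamma_0(\nu)+\gamma_{-1}(\nu)$ has the common asymptotic value $\sqrt{2}/\nu^{4/3}$ to all orders; combined with the prefactor $\nu/z^{1/2}$ this delivers the leading Scorer-function term of (\ref{115}). Second, substituting (\ref{93}) for $\mathcal{G}_\mu$ at $\mu=0$ and $\mu=-1$ and using $G_{0,s}(z)+G_{-1,s}(z)=z^{1/2}G_s^+(z)$ from (\ref{99}), one verifies that $(\nu/\pi z^{1/2})\bigl[\mathcal{G}_0(\nu,z)+\mathcal{G}_{-1}(\nu,z)\bigr]$ matches $\mathcal{J}(\nu,z)$ of (\ref{112}) term by term (with the $\sqrt{2}$ in the $\phi J/\nu\zeta$ piece produced by the same $\gamma_0+\gamma_{-1}$ computation), so subtracting it contributes $-\mathcal{J}(\nu,z)$.

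The main task is to ensure both identifications hold to all asymptotic orders, not merely at leading order. This is precisely the utility of (\ref{91}): since its right-hand side agrees term-by-term with the asymptotic series for $\gamma_\mu(\nu)$, the matching reduces to a Gamma-function calculation essentially already performed in the derivation of (\ref{111}) for $\mathbf{A}_{-\nu}(\nu z)$, to which (\ref{115}) is structurally parallel. At the turning point $z=1$ the individual terms of (\ref{93}) become singular, but as noted after (\ref{95}) the functions $\mathcal{A}$, $\mathcal{B}$ and $\mathcal{G}_\mu$ can be computed either by Cauchy's integral formula or by a re-expansion in inverse powers of $\nu^2$ whose coefficients are regular at $z=1$ by \cref{thm:nopoles}, so the uniform validity throughout $S(\delta)$ is preserved.
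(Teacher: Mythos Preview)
Your proposal is correct and follows essentially the same route as the paper's proof, which simply cites (\ref{80}), (\ref{57}), (\ref{106}) for (\ref{114}) and (\ref{92}), (\ref{98}), (\ref{58}), (\ref{106}), (\ref{112}) for (\ref{115}); you have supplied the details behind that one-line argument, including the key observation that (\ref{91}) evaluated at $\mu=0$ and $\mu=-1$ gives $\gamma_{0}(\nu)\sim\gamma_{-1}(\nu)\sim 1/(\sqrt{2}\,\nu^{4/3})$ exactly (to all orders), so that $(\nu/\pi z^{1/2})\bigl[\mathcal{G}_{0}+\mathcal{G}_{-1}\bigr]$ reproduces $\mathcal{J}(\nu,z)$ of (\ref{112}).
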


\begin{remark} \label{remark2}
In place of (\ref{115}) when $|z|$ is small one can use the more stable expansion (cf. \cref{remark1})
\begin{equation}  \label{116}
\mathbf{E}_{\nu}(\nu z) \sim
-\frac{1}{\pi } \sum\limits_{s=0}^{\infty} 
\frac{G_{s}^{+}(z)
+\cos(\pi\nu)G_{s}^{-}(z)}{\nu^{2s+1}}.
\end{equation}
This comes from (\ref{34}), (\ref{85}), (\ref{88}), (\ref{89}), (\ref{51}) and (\ref{54}).
\end{remark}

\begin{proof}
The expansion (\ref{114}) follows from (\ref{80}), (\ref{57}) and (\ref{106}). Using (\ref{92}), (\ref{98}), (\ref{58}), (\ref{106}), and (\ref{112}) yields (\ref{115}).
\end{proof}

For negative order $-\nu$ we have the following expansions. Unlike the Anger-Weber function $\mathbf{A}_{-\nu}(\nu z)$ the two functions here remain bounded at $z=0$, and hence like (\ref{116}) we include alternative simpler expansions which are stable for small $|z|$. Also note from the first of (\ref{117}) and (\ref{118}) that the proceeding results provide expansions for positive order $\nu$ in the whole left half plane.

\begin{theorem}
As $\nu \rightarrow \infty$ uniformly for $z \in S(\delta)$
\begin{multline}  \label{119}
\mathbf{J}_{-\nu}(\nu z) \sim
\frac{\sin(\pi\nu)}{\nu^{1/3}}\left\{\mathrm{Gi}\left(\nu^{2/3}\zeta  \right)\mathcal{A}(\nu,z)+
{\mathrm{Gi}}^{\prime}\left(\nu^{2/3}\zeta \right)
\mathcal{B}(\nu,z) \right\} 
\\
- \sin(\pi\nu)\mathcal{J}(\nu,z)
+\frac{\sqrt{2} \cos(\pi\nu) w_{0}(\nu,z)}{\nu^{1/3}},
\end{multline}
and
\begin{multline}  \label{120}
\mathbf{E}_{-\nu}(\nu z) \sim
\frac{\cos(\pi\nu)}{\nu^{1/3}}\left\{\mathrm{Gi}\left(\nu^{2/3}\zeta  \right)\mathcal{A}(\nu,z)+
{\mathrm{Gi}}^{\prime}\left(\nu^{2/3}\zeta \right)
\mathcal{B}(\nu,z) \right\}    \\
- \frac{1}{\pi }\sum\limits_{s=0}^{\infty}
\frac{G_{s}^{-}(z)}{\nu^{2s+1}}
- \cos(\pi\nu)\mathcal{J}(\nu,z)
-\frac{\sqrt{2} \sin(\pi\nu) w_{0}(\nu,z)}{\nu^{1/3}}.
\end{multline}
Near $z=0$ the following alternative stable expansions hold
\begin{equation}  \label{121}
\mathbf{J}_{-\nu}(\nu z) \sim
-\frac{\sin(\pi\nu)}{\pi }\sum\limits_{s=0}^{\infty} 
\frac{G_{s}^{+}(z)}{\nu^{2s+1}}
+\frac{\sqrt{2} \cos(\pi\nu) w_{0}(\nu,z)}{\nu^{1/3}},
\end{equation}
and
\begin{equation}  \label{122}
\mathbf{E}_{-\nu}(\nu z) \sim
-\frac{1}{\pi } \sum\limits_{s=0}^{\infty} 
\frac{\cos(\pi\nu)G_{s}^{+}(z)
+G_{s}^{-}(z)}{\nu^{2s+1}}
-\frac{\sqrt{2} \sin(\pi\nu) w_{0}(\nu,z)}{\nu^{1/3}}.
\end{equation}
\end{theorem}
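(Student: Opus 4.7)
The plan is to derive all four expansions by direct substitution into the reflection formulas (\ref{117}) and (\ref{118}), which express $\mathbf{J}_{-\nu}(z)$ and $\mathbf{E}_{-\nu}(z)$ as linear combinations of $\mathbf{J}_{\nu}(z)$ and $\mathbf{E}_{\nu}(z)$ with coefficients $\sin(\pi\nu)$ and $\cos(\pi\nu)$. Since the expansions (\ref{114}), (\ref{115}) and (\ref{116}) for the positive-order functions have just been established, and the trigonometric prefactors are independent of $z$, no new uniform-validity considerations arise: the uniformity on $S(\delta)$ is inherited term-by-term. Thus the proof reduces to careful algebraic bookkeeping.

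For (\ref{119}) I would write $\mathbf{J}_{-\nu}(\nu z) = \sin(\pi\nu)\mathbf{E}_{\nu}(\nu z) + \cos(\pi\nu)\mathbf{J}_{\nu}(\nu z)$ and substitute (\ref{115}) and (\ref{114}). The Scorer-function block in (\ref{115}), once multiplied by $\sin(\pi\nu)$, reproduces the leading term of (\ref{119}); the contribution $-\sin(\pi\nu)\mathcal{J}(\nu,z)$ is copied verbatim; and the $\cos(\pi\nu)w_{0}(\nu,z)$ piece from (\ref{114}) produces the $\sqrt{2}\cos(\pi\nu)w_{0}(\nu,z)/\nu^{1/3}$ tail. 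The crucial cancellation is between the $-\cos(\pi\nu)\pi^{-1}\sum_{s}G_{s}^{-}(z)\nu^{-2s-1}$ term of (\ref{115}), multiplied by $\sin(\pi\nu)$, and the $\sin(\pi\nu)\pi^{-1}\sum_{s}G_{s}^{-}(z)\nu^{-2s-1}$ term of (\ref{114}), multiplied by $\cos(\pi\nu)$: these are equal and opposite and drop out, leaving exactly (\ref{119}). For (\ref{120}) I apply the same procedure to $\mathbf{E}_{-\nu}(\nu z) = \cos(\pi\nu)\mathbf{E}_{\nu}(\nu z) - \sin(\pi\nu)\mathbf{J}_{\nu}(\nu z)$; now the two $G_{s}^{-}(z)$ contributions come with coefficients $-\cos^{2}(\pi\nu)/\pi$ and $-\sin^{2}(\pi\nu)/\pi$, and the Pythagorean identity collapses them to the single $-\pi^{-1}\sum_{s}G_{s}^{-}(z)\nu^{-2s-1}$ appearing in (\ref{120}), with all other pieces transcribed directly.

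For the alternative stable expansions (\ref{121}) and (\ref{122}), valid near $z=0$, I would repeat the substitution but use (\ref{116}) in place of (\ref{115}) for $\mathbf{E}_{\nu}(\nu z)$. All Scorer functions and the companion $\mathcal{J}(\nu,z)$ disappear, leaving only linear combinations of $G_{s}^{+}(z)$, $G_{s}^{-}(z)$ and $w_{0}(\nu,z)$. For (\ref{121}) the $G_{s}^{-}$ contributions again cancel via $\sin(\pi\nu)\cos(\pi\nu)$, leaving the stated $G_{s}^{+}$ sum plus $\sqrt{2}\cos(\pi\nu)w_{0}/\nu^{1/3}$; for (\ref{122}) the identity $\cos^{2}(\pi\nu)+\sin^{2}(\pi\nu)=1$ collapses the two $G_{s}^{-}$ contributions to a single clean sum, and the $G_{s}^{+}$ piece retains the $-\cos(\pi\nu)/\pi$ prefactor from (\ref{116}).

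The argument is entirely routine; the only mild obstacle is signbookkeeping and keeping the four combinations of $(\sin,\cos,G^{+},G^{-})$ straight, which is best handled by writing (\ref{114}), (\ref{115}) and (\ref{116}) side by side before assembling the four linear combinations. No estimate on error terms needs to be redone, since uniformity on $S(\delta)$ for each constituent expansion is already in hand and is preserved under multiplication by bounded constants and addition.
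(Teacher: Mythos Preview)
Your proposal is correct and follows exactly the same approach as the paper's proof, which simply cites (\ref{117}), (\ref{118}), (\ref{114}) and (\ref{115}) for (\ref{119}) and (\ref{120}), and replaces (\ref{115}) by (\ref{116}) for (\ref{121}) and (\ref{122}). Your account is in fact more detailed than the paper's, spelling out the $\sin(\pi\nu)\cos(\pi\nu)$ cancellation and the Pythagorean collapse that the paper leaves implicit.
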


\begin{proof}
Expansions (\ref{119}) and (\ref{120}) follow from (\ref{117}), (\ref{118}), (\ref{114}) and (\ref{115}). Expansions (\ref{121}) and (\ref{122}) follow similarly, except using (\ref{116}) instead of (\ref{115}).
\end{proof}

We finish this section by considering the so-called Neumann polynomials, which are important because they are used to expand functions in terms of Bessel functions. These are defined by $O_{0}(z)=1/z$ and \cite[Eq. 10.23.13]{NIST:DLMF}
\begin{equation}  \label{96aa}
O_{n}(z)=\frac{n}{4}
\sum_{k=0}^{\left\lfloor n/2\right\rfloor}
\frac{(n-k-1)!}{k!}\left(\frac{2}{z}\right)^{n-2k+1}
\quad (n=1,2,3,\cdots).
\end{equation}
Evidently these are not polynomials in $z$, but rather in terms of $1/z$. For large order we have the uniform asymptotic expansions:
\begin{theorem}
\begin{multline}  \label{96a}
O_{n}(nz) \sim \frac{1}{z^{3/2} }
\left[\sum\limits_{s=0}^{\infty} \frac{G_{\mu,s}(z)}{n^{2s+1}} 
-\left(\frac{z}{2}\right)^{1/2}
\frac{\phi(z)J(n,z)}{n\zeta}
\right. \\
 +\frac{\pi}{\sqrt{2}\,n^{1/3}}
\left\{\mathrm{Hi}\left(n^{2/3}\zeta  \right)
\mathcal{A}(n,z)+
{\mathrm{Hi}}^{\prime}\left(n^{2/3}\zeta \right)
\mathcal{B}(n,z) \right\}
\Bigg],
\end{multline}
as $n \rightarrow \infty$ uniformly for $z \in S(\delta)$, where $G_{\mu,s}(z)$ are given by (\ref{83}) and (\ref{84}), and $\mu=1$ for $n$ even and $\mu=0$ for $n$ odd.
\end{theorem}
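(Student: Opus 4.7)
The plan is to derive (\ref{96a}) from the already-established Scorer-function expansion (\ref{96}) for $S_{\mu,\nu}(\nu z)$ by recognising $O_{n}(z)$ as an exact multiple of a terminating Lommel function. When $\nu-\mu$ is an odd positive integer, the factor $a_{k}(-\mu,\nu)$ in the series (\ref{08}) vanishes for $k$ sufficiently large, so $S_{\mu,\nu}(z)$ reduces to a polynomial in $1/z$. I would first verify the identity
\[
O_{n}(z) \;=\; \frac{n^{1-\mu}}{z}\,S_{\mu,n}(z),
\]
with $\mu=1$ for $n$ even and $\mu=0$ for $n$ odd. The verification is a direct match of coefficients: factoring $a_{k}(-\mu,n)=\prod_{m=1}^{k}(2m-\mu-1-n)(2m-\mu-1+n)$ and re-indexing $k\mapsto\lfloor n/2\rfloor-k$ shows that the finite series for $z^{-1}S_{\mu,n}(z)$ coincides with the expansion (\ref{96aa}) for $O_{n}(z)$ up to the constant $n^{1-\mu}$, with the powers of $2$ and the prefactor $n/4$ all lining up. Spot checks at $n=1,2,3,4$ confirm the formula.

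Substituting $z\mapsto nz$ and applying (\ref{96}) then gives
\[
O_{n}(nz) \;=\; n\,z^{-3/2}\Bigl[\mathcal{G}_{\mu}(n,z)+\pi\gamma_{\mu}(n)\bigl\{\mathrm{Hi}(n^{2/3}\zeta)\mathcal{A}(n,z)+\mathrm{Hi}'(n^{2/3}\zeta)\mathcal{B}(n,z)\bigr\}\Bigr],
\]
where the prefactor $n$ arises from $n^{-\mu}\cdot n^{\mu+1}$. Expanding $\mathcal{G}_{\mu}(n,z)$ via (\ref{93}) absorbs one factor of $n$ into the series $\sum_{s\ge 0}G_{\mu,s}(z)/n^{2s+1}$, and leaves a $J(n,z)$ contribution with coefficient $n^{1/3}\gamma_{\mu}(n)z^{1/2}\phi(z)/\zeta$ and a Scorer contribution with coefficient $\pi n\gamma_{\mu}(n)$.

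The final ingredient is the algebraic collapse of $\gamma_{\mu}(n)$ in these two parameter cases. Substituting $\mu=0$, $n$ odd into (\ref{91}) gives $\Gamma((n+1)/2)/\Gamma((n+1)/2)=1$, while $\mu=1$, $n$ even gives $\Gamma(n/2+1)/\Gamma(n/2)=n/2$; combined with the prefactor $2^{\mu-1/2}n^{-\mu-4/3}$, both cases yield $\gamma_{\mu}(n)\sim 1/(\sqrt{2}\,n^{4/3})$ to \emph{all} asymptotic orders, the higher-order corrections in the expansion of (\ref{91}) vanishing identically. Substituting this turns $\pi n\gamma_{\mu}(n)$ into $\pi/(\sqrt{2}\,n^{1/3})$ and $n^{1/3}\gamma_{\mu}(n)z^{1/2}$ into $(z/2)^{1/2}/n$, which matches (\ref{96a}) term by term.

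The step I expect to be the main obstacle is the first: the bookkeeping required to match the two finite series with their mildly incompatible indexings, and to package the two parities of $n$ uniformly via a single $\mu$-dependent formula. Once this identity is secure, the asymptotic expansion is inherited verbatim from (\ref{96}), and the domain of uniform validity $z\in S(\delta)$ is exactly the one provided by the results of \cref{sec3}.
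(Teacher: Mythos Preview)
Your proposal is correct and follows essentially the same route as the paper: the paper simply cites Watson \cite[Sect.~10.74]{Watson:1944:TTB} for the identity $O_{2m}(z)=z^{-1}S_{1,2m}(z)$, $O_{2m+1}(z)=(2m+1)z^{-1}S_{0,2m+1}(z)$ (which is exactly your $O_n(z)=n^{1-\mu}z^{-1}S_{\mu,n}(z)$), and then appeals to (\ref{91}), (\ref{93}) and (\ref{96}). Your version supplies what the paper omits, namely a direct verification of the Watson identity and the explicit observation that for these particular $(\mu,n)$ the gamma ratio in (\ref{91}) collapses to $1$ or $n/2$, so $\gamma_\mu(n)\sim(\sqrt{2}\,n^{4/3})^{-1}$ with no correction terms; this is precisely the calculation the paper's terse ``then use (\ref{91}), (\ref{93}) and (\ref{96})'' is pointing at.
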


\begin{proof}
From \cite[Sect. 10.74]{Watson:1944:TTB} for $m=0,1,2,\cdots$
\begin{equation}  \label{96b}
O_{2m}(z)=z^{-1}S_{1,2m}(z), \quad 
O_{2m+1}(z)=(2m+1)z^{-1}S_{0,2m+1}(z).
\end{equation}
Then use (\ref{91}), (\ref{93}) and (\ref{96}).
\end{proof}

Since $O_{n}(z)$ is even/odd according to $n$ being odd/even respectively it suffices to use (\ref{96a}) for $\Re(z) \geq 0$ ($z \neq 0$).

\section{Struve functions}
\label{sec5}

Struve functions are a special case of Lommel functions. From \cite[Sect. 11.2]{NIST:DLMF} they satisfy the inhomogeneous differential equation
\begin{equation}  \label{123}
\frac{d^{2}y}{dz^{2}}+\frac{1}{z}\frac{dy}{dz}
+\left ( 1-\frac{\nu^{2}}{z^{2}} \right )y=
\frac{\left(\tfrac{1}{2}z\right)^{\nu-1}}
{\sqrt{\pi}\Gamma\left (\nu+\tfrac{1}{2} \right )}.
\end{equation}
Thus if a Lommel function $y_{\mu,\nu}(z)$ is a solution of (\ref{02}) then 
\begin{equation}  \label{123a}
\frac{y_{\nu,\nu}(z)}
{2^{\nu-1}\sqrt{\pi}\Gamma\left(\nu+\tfrac{1}{2} \right )},
\end{equation}
is a solution of (\ref{123}). However, we cannot directly apply the asymptotic expansions of \cref{sec3} since in the present case $\mu=\nu$ and hence is unbounded. Before we address this, let us recap properties of fundamental solutions from the literature, along with new definitions for numerically satisfactory solutions.

The first important solution is defined by \cite[11.2.1]{NIST:DLMF}
\begin{equation}  \label{124}
\mathbf{H}_{\nu}(z)=
\left(\tfrac{1}{2}z\right)^{\nu+1}\sum_{k=0}^{\infty}
\frac{(-1)^{k}\left(\tfrac{1}{2}z\right)^{2k}}
{\Gamma\left(k+\tfrac{3}{2}\right)
\Gamma\left(k+\nu+\tfrac{3}{2}\right)},
\end{equation}
where principal values correspond to principal values of $(z/2)^{\nu+1}$. This has the characteristic property of being bounded at $z=0$ and real for $z$ positive. Unlike the corresponding Lommel function $s_{\mu,\nu}(z)$ this is well-defined for all positive $\nu$, so it is not necessary for us to introduce another real-valued solution subdominant at $z=0$, as we did with (\ref{34}).

A numerically satisfactory companion is defined by \cite[11.2.5]{NIST:DLMF}
\begin{equation}  \label{125}
\mathbf{K}_{\nu}(z)=\mathbf{H}_{\nu}(z)-Y_{\nu}(z).
\end{equation}
This is important due to the uniquely-defining property from \cite[Eq. 11.6.1]{NIST:DLMF} that as $z \rightarrow \infty$ with $|\arg(z)| \leq \pi - \delta$ 
\begin{equation}  \label{126}
\mathbf{K}_{\nu}(z)\sim\frac{1}{\pi}
\sum_{k=0}^{\infty}\frac{\Gamma\left(k+\tfrac{1}{2}\right)
\left(\tfrac{1}{2}z\right)^{\nu-2k-1}}
{\Gamma\left(\nu+\tfrac{1}{2}-k\right)}.
\end{equation}

From variation of parameters on (\ref{123}) (cf. (\ref{10}))
\begin{multline}  \label{127}
\mathbf{K}_{\nu}(z)
=\frac{i\sqrt{\pi}}
{2^{\nu+1}\Gamma\left (\nu+\tfrac{1}{2} \right )}
\left[H_{\nu}^{(2)}(z)\int_{\infty \exp(\pi i/2)}^z 
t^{\nu}H_{\nu}^{(1)}(t)dt \right.
\\
\left. -H_{\nu}^{(1)}(z)\int_{\infty \exp(-\pi i/2)}^z 
t^{\nu}H_{\nu}^{(2)}(t)dt
\right].
\end{multline}

Analogously to (\ref{13}) and (\ref{22}) we define
\begin{equation}  \label{128}
\mathbf{K}_{\nu}^{(1)}(z)
=\mathbf{K}_{\nu}(z)-iH_{\nu}^{(1)}(z)
=\mathbf{H}_{\nu}(z)-iJ_{\nu}(z),
\end{equation}
and
\begin{equation}  \label{129}
\mathbf{K}_{\nu}^{(2)}(z)
=\mathbf{K}_{\nu}(z)+iH_{\nu}^{(2)}(z)
=\mathbf{H}_{\nu}(z)+iJ_{\nu}(z).
\end{equation}
From these definitions, along with (\ref{46}), (\ref{46a}), (\ref{46b}), (\ref{124}) and (\ref{126}), we see that $\mathbf{K}_{\nu}^{(j)}(z)$ is uniquely defined by being subdominant at $(-1)^{j-1} i \infty$ in the principal plane, as well as at $z=0$. We note that from (\ref{128}) and (\ref{129})
\begin{equation}  \label{136a}
\mathbf{H}_{\nu}(z)
=\tfrac{1}{2}\left\{\mathbf{K}_{\nu}^{(1)}(z)
+\mathbf{K}_{\nu}^{(2)}(z)\right\}.
\end{equation}

In terms of Lommel functions $\mathbf{K}_{\nu}(z)$, $\mathbf{H}_{\nu}(z)$, $\mathbf{K}_{\nu}^{(1)}(z)$ and $\mathbf{K}_{\nu}^{(2)}(z)$ are given by (\ref{123a}) with $y_{\nu,\nu}(z)$ equal to $S_{\nu,\nu}(z)$, $S_{\nu,\nu}^{(0)}(z)$, $S_{\nu,\nu}^{(1)}(z)$ and $S_{\nu,\nu}^{(2)}(z)$, respectively. This is because the corresponding functions are solutions of (\ref{123}) having the same unique subdominant behaviour at two of the singularities, except for the $\mathbf{H}_{\nu}(z)$ identification which comes from (\ref{34}) and (\ref{136a}).

From \cite[Eqs. 10.11.1 and 11.4.16]{NIST:DLMF} and the above definitions we also have the following analytic continuation formulas for $m \in \mathbb{Z}$
\begin{equation}  \label{130}
\mathbf{K}_{\nu}^{(j)}(ze^{2m\pi i})
=e^{2m \nu \pi i}\mathbf{K}_{\nu}^{(j)}(z)
\quad (j=1,2),
\end{equation}
\begin{equation}  \label{130a}
\mathbf{K}_{\nu}^{(1)}(ze^{(2m+1)\pi i})
=-e^{(2m+1) \nu \pi i}\mathbf{K}_{\nu}^{(2)}(z),
\end{equation}
and
\begin{equation}  \label{130b}
\mathbf{K}_{\nu}^{(2)}(ze^{(2m+1)\pi i})
=-e^{(2m+1) \nu \pi i}\mathbf{K}_{\nu}^{(1)}(z).
\end{equation}

We now turn to the problem of obtaining asymptotic expansions. To do so we shall express the Struve functions in terms of Lommel functions of bounded $\mu$, which we call $\tilde{\mu}$. To define this we first introduce the large positive integer
\begin{equation}  \label{131}
k_{\nu}=\lfloor \tfrac{1}{2}\nu -\tfrac{1}{2} \rfloor.
\end{equation}
Observe that $0 \leq \nu-2k_{\nu}-1 < 2$. Then our bounded parameter satisfies $-1 \leq \tilde{\mu} < 1$ and is defined by
\begin{equation}  \label{132}
\tilde{\mu}=\nu-2k_{\nu}-2.
\end{equation}

We next define a function $p_{\nu}(z)$ to consist of the first $k_{\nu}+1$ terms of (\ref{126}), and in particular terms involving nonnegative powers of $z$, by
\begin{equation}  \label{133}
p_{\nu}(z)=\frac{1}{\pi}
\sum_{k=0}^{k_{\nu}}\frac{\Gamma\left(k+\tfrac{1}{2}\right)
\left(\tfrac{1}{2}z\right)^{\nu-2k-1}}
{\Gamma\left(\nu+\tfrac{1}{2}-k\right)}.
\end{equation}
Our desired representations are then given by:
\begin{lemma} \label{lomstr}
\begin{equation}  \label{135}
\mathbf{K}_{\nu}(z)
=B(\tilde{\mu},\nu)S_{\tilde{\mu},\nu}(z)+p_{\nu}(z),
\end{equation}
\begin{equation}  \label{136}
\mathbf{K}_{\nu}^{(j)}(z)
=B(\tilde{\mu},\nu) S_{\tilde{\mu},\nu}^{(j)}(z)+p_{\nu}(z)
\quad (j=1,2),
\end{equation}
and
\begin{equation}  \label{136b}
\mathbf{H}_{\nu}(z)
=B(\tilde{\mu},\nu) S_{\tilde{\mu},\nu}^{(0)}(z)+p_{\nu}(z),
\end{equation}
where
\begin{equation}  \label{134b}
B(\tilde{\mu},\nu)
=\frac{2^{1-\tilde{\mu}}
\Gamma\left(\tfrac{1}{2}\nu-\tfrac{1}{2}\tilde{\mu}+\tfrac{1}{2} \right)}{\pi \Gamma\left(\tfrac{1}{2}\nu
+\tfrac{1}{2}\tilde{\mu}+\tfrac{1}{2}
\right )}.
\end{equation}
\end{lemma}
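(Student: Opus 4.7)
My plan is in two stages: first I identify the four Struve functions with specific Lommel functions at $\mu=\nu$, and then I iterate the standard Lommel recurrence to lower $\mu$ down to the bounded value $\tilde{\mu}$.

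Set $D=2^{\nu-1}\sqrt{\pi}\,\Gamma(\nu+\tfrac{1}{2})$. From (\ref{123a}) it suffices to identify each Struve function with the corresponding Lommel solution at $\mu=\nu$ divided by $D$. Since $\mathbf{K}_{\nu}$ is the unique particular solution of (\ref{123}) whose asymptotic form (\ref{126}) is subdominant at both $z=\infty\,e^{\pm i\pi/2}$, and by (\ref{08}) the Lommel function $S_{\nu,\nu}$ enjoys exactly that characterisation as a solution of (\ref{02}) with $\mu=\nu$, one obtains $\mathbf{K}_{\nu}(z)=S_{\nu,\nu}(z)/D$. The identities $\mathbf{K}_{\nu}^{(j)}(z)=S_{\nu,\nu}^{(j)}(z)/D$ for $j=1,2$ then follow by direct algebra from (\ref{13}), (\ref{22}), (\ref{128}), (\ref{129}) together with the Legendre duplication identity $A(\nu,\nu)=D$, and averaging via (\ref{34}), (\ref{136a}) yields $\mathbf{H}_{\nu}(z)=S_{\nu,\nu}^{(0)}(z)/D$.

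For the reduction step I would invoke the standard Lommel recurrence
\[
S_{\mu+2,\nu}(z)=z^{\mu+1}-\bigl((\mu+1)^{2}-\nu^{2}\bigr)\,S_{\mu,\nu}(z),
\]
which also holds verbatim for each $S_{\mu,\nu}^{(j)}$, since the discrepancy between the two sides is a homogeneous solution forced to vanish by its doubly subdominant behaviour. Iterating $k_{\nu}+1$ times from $\mu=\tilde{\mu}$ to $\mu=\tilde{\mu}+2(k_{\nu}+1)=\nu$ produces
\[
S_{\nu,\nu}(z)=P_{\nu}(z)+(-1)^{k_{\nu}+1}\!\!\prod_{j=0}^{k_{\nu}}\!\bigl((\tilde{\mu}+2j+1)^{2}-\nu^{2}\bigr)\,S_{\tilde{\mu},\nu}(z),
\]
where $P_{\nu}(z)$ is an explicit polynomial supported on the powers $z^{\tilde{\mu}+2k+1}$ ($0\le k\le k_{\nu}$); the same decomposition holds with every $S$ replaced by $S^{(j)}$.

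The main obstacle, and the bulk of the work, will be matching the coefficient and polynomial part after division by $D$ against the claimed $B(\tilde{\mu},\nu)$ and $p_{\nu}(z)$. Writing each factor as $(\tilde{\mu}+2j+1)^{2}-\nu^{2}=-(2(k_{\nu}-j)+1)(2\nu+2j-2k_{\nu}-1)$, invoking the double-factorial identity $(2n-1)!!=2^{n}\Gamma(n+\tfrac{1}{2})/\sqrt{\pi}$ and telescoping the shifted-gamma product $\prod_{l=0}^{i-1}(\nu-l-\tfrac{1}{2})=\Gamma(\nu+\tfrac{1}{2})/\Gamma(\nu-i+\tfrac{1}{2})$, the leading coefficient reduces to $B(\tilde{\mu},\nu)$ by using $\nu-\tilde{\mu}=2k_{\nu}+2$. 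An identical manipulation on the partial products that build $P_{\nu}(z)$, re-indexed by $i=k_{\nu}-k$, reproduces (\ref{133}) term by term. Because the recurrence and the polynomial increment are independent of the superscript $(j)$, the identities (\ref{136}) and (\ref{136b}) then follow immediately from the argument for (\ref{135}).
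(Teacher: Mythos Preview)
Your argument is correct, but the paper proceeds differently. Rather than first identifying the Struve functions with Lommel functions at $\mu=\nu$ and then iterating the recurrence $S_{\mu+2,\nu}(z)=z^{\mu+1}-((\mu+1)^2-\nu^2)S_{\mu,\nu}(z)$ down to $\tilde{\mu}$, the paper simply verifies by direct substitution that the truncated sum $p_{\nu}(z)$ in (\ref{133}) satisfies
\[
p_{\nu}''+\tfrac{1}{z}p_{\nu}'+\Bigl(1-\tfrac{\nu^{2}}{z^{2}}\Bigr)p_{\nu}
=\frac{(z/2)^{\nu-1}}{\sqrt{\pi}\,\Gamma(\nu+\tfrac12)}-B(\tilde{\mu},\nu)z^{\tilde{\mu}-1},
\]
so that $\mathbf{K}_{\nu}(z)-p_{\nu}(z)$ solves the Lommel equation (\ref{02}) with $\mu=\tilde{\mu}$, scaled by $B(\tilde{\mu},\nu)$; uniqueness of the doubly subdominant solution at $z=\pm i\infty$ then forces (\ref{135}). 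The same reasoning gives (\ref{136}), and (\ref{136b}) follows by averaging. Your route is more constructive in that it exhibits exactly how $p_{\nu}(z)$ arises as the accumulated polynomial remainder of the recurrence, and your gamma manipulations do collapse correctly to $B(\tilde{\mu},\nu)$; the paper's route avoids that bookkeeping entirely, trading the iterated recurrence for a one-line differential-equation check plus the subdominance argument. Note also that your justification of the recurrence for $S_{\mu,\nu}^{(j)}$ via ``doubly subdominant homogeneous solution forced to vanish'' can be replaced by a direct computation from (\ref{15}) and (\ref{23}), since $A(\mu+2,\nu)=((\mu+1)^2-\nu^2)A(\mu,\nu)$ and $e^{(\mu+2-\nu)\pi i/2}=-e^{(\mu-\nu)\pi i/2}$.
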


\begin{proof}
By direct substitution of (\ref{133}) into (\ref{123}) it is straightforward to verify that $p_{\nu}(z)$ satisfies the modified Lommel equation
\begin{equation}  \label{134}
\frac{d^{2}p_{\nu}}{dz^{2}}+\frac{1}{z}\frac{dp_{\nu}}{dz}
+\left ( 1-\frac{\nu^{2}}{z^{2}} \right )p_{\nu}
=\frac{\left(\tfrac{1}{2}z\right)^{\nu-1}}
{\sqrt{\pi}\Gamma\left (\nu+\tfrac{1}{2} \right )}
-\frac{\Gamma\left (k_{\nu}+\tfrac{3}{2} \right)
\left(\tfrac{1}{2}z\right)^{\tilde{\mu}-1}}
{\pi \Gamma\left (\nu-k_{\nu}-\tfrac{1}{2} \right )}.
\end{equation}
For the second term on the RHS we have on using (\ref{132})
\begin{equation}  \label{134a}
\frac{\Gamma\left (k_{\nu}+\tfrac{3}{2} \right)
\left(\tfrac{1}{2}z\right)^{\tilde{\mu}-1}}
{\pi \Gamma\left (\nu-k_{\nu}-\tfrac{1}{2} \right )}
=B(\tilde{\mu},\nu)z^{\tilde{\mu}-1},
\end{equation}
where $B(\tilde{\mu},\nu)$ is given by (\ref{134b}). Then from (\ref{02}) (with $\mu = \tilde{\mu}$), (\ref{08}), (\ref{126}), (\ref{134}) and (\ref{134a}) we deduce (\ref{135}) must hold, since both sides satisfy (\ref{123}) and have the same unique subdominant behaviour at $z = \pm i\infty$ when $|\arg(z)|\leq \pi -\delta$. The relations (\ref{136}) can be established by similar reasoning. Finally from (\ref{34}), (\ref{136a}) and (\ref{136}) we arrive at (\ref{136b}).
\end{proof}

Uniform asymptotic expansions for the four Struve functions now follow from \cref{lomstr} and the ones for the corresponding Lommel functions given in \cref{sec3}. So in (\ref{135}) we simply replace $z$ by $\nu z$ for all three functions, recall (\ref{133}) and (\ref{134b}), and then use the uniform asymptotic expansions (\ref{87}) and (\ref{96}) with $\mu$ replaced by $\tilde{\mu}$ (given by (\ref{131}) and (\ref{132})). Likewise for (\ref{136}) and (\ref{136b}) with the expansions for $S_{\tilde{\mu},\nu}^{(j)}(\nu z)$ ($j=0,1,2$) given by (\ref{88}), (\ref{89}), (\ref{89aa}), (\ref{89b}), (\ref{89c}), (\ref{95}) and (\ref{98}).

Although we have obtained asymptotic expansions that are valid for $z \in S(\delta)$ there is a problem near $z=0$ for the three that are subdominant at this point. To illustrate this and show how to overcome it, let us focus on  the expansion for $\mathbf{H}_{\nu}(\nu z)$, since the ones for $\mathbf{K}_{\nu}^{(j)}(\nu z)$ ($j=1,2$) are identical to this near $z=0$.

As we described in \cref{remark1}, we must use the expansion (\ref{89aa}) for $S_{\tilde{\mu},\nu}^{(0)}(z)$ in (\ref{136b}) for  $|z| < \delta$ (which we assume from now on). Here $\delta$ is an arbitrary positive constant, but it must be sufficiently small so that $z \in S_{0}$. Thus we have
\begin{equation}  \label{137}
\mathbf{H}_{\nu}(\nu z) \sim
\frac{\nu^{\tilde{\mu}} B(\tilde{\mu},\nu)}{z^{1/2}}
\sum\limits_{s=0}^{\infty} 
\frac{G_{\tilde{\mu},s}(z)}{\nu^{2s+1}}
+p_{\nu}(\nu z).
\end{equation}

The problem is that as $z \rightarrow 0$ both terms on the RHS of this relation behave like a constant times $z^{\tilde{\mu}+1}$ (see (\ref{34a}), (\ref{83}), (\ref{84}), (\ref{132}) and (\ref{133})). On the other hand from (\ref{124}) the LHS behaves like a constant times $z^{\nu+1}$, which is much smaller. We conclude there is cancellation of relatively large terms in the expansions for (\ref{136}) and (\ref{136b}), which renders the expansion numerically unsatisfactory near $z=0$ as it currently stands. To overcome this we proceed as follows.

Consider the first term on the RHS of (\ref{137}), and divide this by $z^{\nu+1}$. Then  using Stirling's formula \cite[Eq. 5.11.3]{NIST:DLMF} on (\ref{134b}) for large $\nu$ and bounded $\tilde{\mu}$ we can re-express this term formally as
\begin{equation}  \label{138}
\frac{\nu^{\tilde{\mu}} B(\tilde{\mu},\nu)}{z^{\nu+\frac{3}{2}}}
\sum\limits_{s=0}^{\infty} 
\frac{G_{\tilde{\mu},s}(z)}{\nu^{2s+1}}
=\frac{2}{\pi} \sum\limits_{s=0}^{\infty}
\frac{\tilde{G}_{\tilde{\mu},s}(z)}{\nu^{2s+1}},
\end{equation}
where from (\ref{132}) each $\tilde{G}_{\tilde{\mu},s}(z)$ has a pole of order at most $2k_{\nu}+2$ at $z=0$. For the first two we find
\begin{equation}  \label{139}
\tilde{G}_{\tilde{\mu},0}(z)=
\frac{G_{\tilde{\mu},0}(z)}{z^{\tilde{\mu}+2k_{\nu}+\frac{7}{2}}}
=\frac{1}{z^{2+2k_{\nu}}\left(z^2-1\right)},
\end{equation}
and
\begin{equation}  \label{140}
\tilde{G}_{\tilde{\mu},1}(z)=
\frac{G_{\tilde{\mu},1}(z) 
+ \frac{1}{6}\tilde{\mu}
\left(\tilde{\mu}^{2} - 1\right) G_{\tilde{\mu},0}(z)}
{z^{\tilde{\mu}+2k_{\nu}+\frac{7}{2}}}
=\frac{\tilde{g}_{\tilde{\mu},1}(z)}{6z^{2+2k_{\nu}}
\left(z^2-1\right)^{4}},
\end{equation}
where
\begin{multline}  \label{141}
\tilde{g}_{\tilde{\mu},1}(z)
=\tilde{\mu}\left(\tilde{\mu}^2-1 \right)z^{6}
-3(\tilde{\mu}-1) \left(\tilde{\mu}^{2}+3\tilde{\mu}-2\right)z^{4} \\
+3(\tilde{\mu}+3) \left(\tilde{\mu}^{2}+\tilde{\mu}-4\right)z^{2}
- (\tilde{\mu}+1)(\tilde{\mu}+2)(\tilde{\mu}+3).
\end{multline}

Now consider the second term on the RHS of (\ref{137}), also divided by $z^{\nu+1}$. On letting $z \rightarrow  \nu z$ and $k \rightarrow k_{\nu}-k$ in (\ref{133}) this can be expressed as
\begin{equation}  \label{142}
\frac{p_{\nu}(\nu z)}{z^{\nu+1}}
= \frac{1}{\pi z^{2+2k_{\nu}}}
\left(\frac{\nu}{2} \right)^{\tilde{\mu}+1}
\sum_{k=0}^{k_{\nu}}\frac{
\Gamma\left(\tfrac{1}{2}\nu-\tfrac{1}{2}\tilde{\mu}
-k-\tfrac{1}{2}\right)
\left(\tfrac{1}{2}\nu z\right)^{2k}}
{\Gamma\left(\tfrac{1}{2}\nu+\tfrac{1}{2}\tilde{\mu}
+k+\tfrac{3}{2}\right)}.
\end{equation}
If we expand this asymptotically in inverse powers of $\nu$, the poles at $z=0$ must \textit{exactly} cancel those of the RHS of (\ref{138}) when these two are added together, otherwise the RHS of (\ref{137}) would not be $\mathcal{O}(z^{\nu+1})$ as $z \rightarrow 0$. Moreover the RHS of (\ref{142}) is a finite sum of only negative integer powers of $z$, i.e. poles only. Consequently, as $\nu \rightarrow \infty$ with $|z|<\delta$ we arrive at our desired modification of (\ref{137}), namely from (\ref{138})
\begin{multline}  \label{143}
\mathbf{H}_{\nu}(\nu z)
\sim z^{\nu +1} \left\{\frac{\nu^{\tilde{\mu}} B(\tilde{\mu},\nu)}{z^{\nu+\frac{3}{2}}}
\sum\limits_{s=0}^{\infty} 
\frac{G_{\tilde{\mu},s}(z)}{\nu^{2s+1}}
+\frac{p_{\nu}(\nu z)}{z^{\nu +1}} \right\} \\
= z^{\nu +1} \left\{\frac{2}{\pi} \sum\limits_{s=0}^{\infty}
\frac{\tilde{G}_{\tilde{\mu},s}(z)}{\nu^{2s+1}}
+\frac{p_{\nu}(\nu z)}{z^{\nu +1}} \right\}
= \frac{2 z^{\nu +1}}{\pi} \sum\limits_{s=0}^{\infty}
\frac{\tilde{G}_{\tilde{\mu},s}^{\ast}(z)}{\nu^{2s+1}},
\end{multline}
where $\tilde{G}_{\tilde{\mu},s}^{\ast}(z)$ denotes the regular part of $\tilde{G}_{\tilde{\mu},s}(z)$ at $z=0$, i.e. the function with its poles at $z=0$ removed. Recall the same expansion as (\ref{143}) holds for $\mathbf{K}_{\nu}^{(j)}(z)$ ($j=1,2$) as $\nu \rightarrow \infty$ with $|z|<\delta$.

How do we compute the regular parts for all the coefficients in this expansion? For the first term (\ref{139}) we can simply use the geometric series to get
\begin{equation}  \label{144}
\tilde{G}_{\tilde{\mu},0}^{\ast}(z)
=\frac{1}{z^{2+2k_{\nu}}\left(z^2-1\right)}
+\sum_{k=1}^{k_{\nu}+1}\frac{1}{z^{2k}}
=\frac{1}{z^2-1}.
\end{equation}
To do so for general terms we note from (\ref{83}), (\ref{84}), (\ref{132}) and (\ref{138}) that each coefficient $\tilde{G}_{\tilde{\mu},s}(z)$ is a linear combination of rational functions of the form
\begin{equation}  \label{145}
q_{l,m}(\omega)=\frac{1}{\omega^l(1-\omega)^m},
\end{equation}
where $l \leq k_{\nu}+1$, and $\omega = z^2$. For example from (\ref{140}), (\ref{141}) and (\ref{145})
\begin{multline}  \label{146}
\tilde{G}_{\tilde{\mu},1}(z)
=\tfrac{1}{6}
\tilde{\mu}\left(\tilde{\mu}^2-1\right)q_{k_{\nu}-5,4}(\omega)
-\tfrac{1}{2}(\tilde{\mu}-1) \left(\tilde{\mu}^{2}+3\tilde{\mu}-2\right)q_{k_{\nu}-3,4}(\omega) \\
+\tfrac{1}{2}(\tilde{\mu}+3) \left(\tilde{\mu}^{2}+\tilde{\mu}-4\right)q_{k_{\nu}-1,4}(\omega)
-\tfrac{1}{6}(\tilde{\mu}+1)
(\tilde{\mu}+2)(\tilde{\mu}+3) q_{k_{\nu}+1,4}(\omega).
\end{multline}
Thus $\tilde{G}_{\tilde{\mu},1}^{\ast}(z)$ is the same expression but with each $q_{l,m}(\omega)$ replaced by its regular part $q_{l,m}^{\ast}(\omega)$. The same of course is true for the general coefficient $\tilde{G}_{\tilde{\mu},s}^{\ast}(z)$. So it remains to find the regular parts of the rational functions $q_{l,m}(\omega)$, and this can be achieved using the binomial expansion. Hence, assuming $|\omega|<1$, we have
\begin{equation}  \label{147}
\frac{1}{(1-\omega)^m}
=\sum_{k=0}^{\infty}\binom{m+k-1}{k}\omega^{k},
\end{equation}
and therefore for $l \geq 1$ we have from (\ref{145})
\begin{equation}  \label{148}
q_{l,m}^{\ast}(\omega)
=\sum_{k=l}^{\infty}\binom{m+k-1}{k}\omega^{k-l}
=\binom{m+l-1}{l} F(l+m,1;l+1;\omega),
\end{equation}
where $F$ is the hypergeometric function \cite[Eq. 15.2.1]{NIST:DLMF}. Clearly for $l \leq 0$ there is nothing to do, because $q_{l,m}^{\ast}(\omega)=q_{l,m}(\omega)$ since from (\ref{145}) we see it does not have a pole at $\omega=z=0$.

\section*{Acknowledgments}
Financial support from Ministerio de Ciencia e Innovaci\'on, Spain, project PGC2018-098279-B-I00 (MCIU/AEI/FEDER, UE) is acknowledged. 

\bibliographystyle{siamplain}
\bibliography{biblio}

\end{document}